\documentclass[a4paper,reqno,12pt]{amsart}

\usepackage[utf8]{inputenc}
\usepackage{amsmath, amssymb, amsthm, epsfig}
\usepackage{hyperref, latexsym}
\usepackage{url}
\usepackage[mathscr]{euscript}
\usepackage{color}
\usepackage{harpoon}
\usepackage{url}
\usepackage{mathabx}
\usepackage{tikz}
\usepackage{mathtools,fbox}
\usepackage{refcheck}
\DeclareMathAlphabet{\pazocal}{OMS}{zplm}{m}{n}

\usepackage{pgfplotstable}
\usepackage{pgfplots}

\usepackage{fullpage} 
\usepackage{setspace}
\usepackage{mathtools}
\mathtoolsset{showonlyrefs}

\usepackage{refcheck} 
\norefnames
\nocitenames

\def\today{\ifcase\month\or
  January\or February\or March\or April\or May\or June\or
  July\or August\or September\or October\or November\or December\fi
  \space\number\day, \number\year}

\DeclareMathOperator{\supp}{\mathrm{supp}}

\newtheorem{theorem}{Theorem}

\newtheorem{lemma}[theorem]{Lemma}
\newtheorem{proposition}[theorem]{Proposition}
\newtheorem{corollary}[theorem]{Corollary}

\newtheorem{remark}{Remark}

\newtheorem{definition}{Definition}

\newcommand{\A}{\mathcal{A}}

\newcommand{\F}{\mathcal{F}}
\newcommand{\G}{\mathcal{G}}

\newcommand{\M}{\mathcal{M}}
\newcommand{\N}{\mathcal{N}}

\renewcommand{\P}{\mathcal{P}}

\newcommand{\R}{\mathcal{R}}
\renewcommand{\S}{\mathcal{S}}

\newcommand{\U}{\mathcal{U}}

\newcommand{\W}{\mathcal{W}}

\newcommand{\MM}{\mathfrak{M}}
\newcommand{\NN}{\mathfrak{N}}

\newcommand{\n}{\mathbb{N}}
\newcommand{\z}{\mathbb{Z}}
\newcommand{\q}{\mathbb{Q}}
\renewcommand{\r}{\mathbb{R}}
\newcommand{\cp}{\mathbb{C}} %<--- because of my last name
\newcommand{\im}{{\rm Im}\,}

\newcommand{\ft}{\widehat}

\newcommand{\bo}{\boldsymbol}

\newcommand{\la}{\lambda}
\newcommand{\ga}{\gamma}
\newcommand{\al}{\alpha}

\newcommand{\ep}{\varepsilon}

\newcommand{\si}{\sigma}
\newcommand{\p}{\varphi}

\newcommand{\ord}{{\rm ord}}

\renewcommand{\d}{\mathrm{d}}
\newcommand{\ov}[1]{\overline{#1}}
\newcommand{\w}{\ov{w}}
\newcommand{\cd}{\cdot}
\newcommand{\1}{{\bf 1}}
\newcommand{\ra}{\rightarrow}

\newcommand{\e}{\mathbb{E}}
\newcommand{\mt}{\mapsto}
\newcommand{\del}{{\bo \delta}}

\renewcommand{\Re}{{\rm Re}\,}
\renewcommand{\Im}{{\rm Im}\,}
\newcommand{\res}[1]{ \mathop{\mathrm{Res}}_{z = #1}\,}
\newcommand{\spec}{{\rm spec}}

\begin{document}

%------------------HEADINGS------------------------

\title[]{Classification of Fourier summation formulas on a horizontal strip}
\author{Guilherme Vedana}
\date{\today}
\subjclass[2020]{42A38 (primary), 30E20, 52C23 (secondary).}
\keywords{Fourier summation formulas; almost periodic functions; generalized Nevanlinna functions; Fourier quasicrystals; Selberg class}
\address{IMPA - Instituto de Matemática Pura e Aplicada, Rio de Janeiro, Brazil.}
\email{guilhermeisraelvedana@gmail.com}
\allowdisplaybreaks
%\numberwithin{theorem}{subsection}{section}
%\numberwithin{equation}{section}

%------------------ABSTRACT------------------------------

\begin{abstract}
We classify Fourier summation identities in which the measure on the Fourier side is supported in a horizontal strip of $\mathbb{C}$. Let $\mu=\nu+\eta$, where $\nu$ is a strongly tempered measure on $\mathbb{R}$, $\eta=\sum_{m\geq1} b(\gamma_m)\bo \delta_{\gamma_m}$ is a strongly tempered pure point measure supported off the real line, and $a:\Lambda\rightarrow\mathbb{C}$, with $\Lambda=\{\lambda_n\}_{n\geq1}\subset\r$, has finite exponential growth. Under natural real-antipodal and conjugation-symmetry assumptions, we characterize summation identities of the form
\begin{align}
    \sum_{n\geq1} a(\lambda_n)\varphi(\lambda_n)=\int_{\mathbb{R}} \widehat{\varphi}(t)\mathrm{d}\nu(t)+\sum_{m\geq1} b(\gamma_m)\widehat{\varphi}(\gamma_m),
\end{align}
valid for every $\varphi\in C^\infty_c(\mathbb{R})$, where $\widehat{\varphi}$ denotes the Fourier transform. We prove that each such identity determines a unique generating function $F$ that is holomorphic and almost periodic in the half-plane above the strip and admits a meromorphic continuation to $\mathbb{C}^+$. The measure $\eta$ encodes the poles and residues of $F$, while $\nu$ describes the boundary behavior of its regular part through a generalized Nevanlinna representation, and $a$ determines its Fourier coefficients. Conversely, every function in the corresponding meromorphic class whose Fourier coefficients satisfy a local summability condition determines a unique summation identity of this form. The proof combines a strip version of the Bridge Lemma with a Cauchy-transform argument that accounts for the off-real poles. As an application, we show that the Guinand-Weil explicit formula for every member of the Selberg class, including non-self-dual members, fits into our framework, and we identify its associated generating function.
\end{abstract}

%---------------------TITLE--------------------------------

\maketitle

%---------------------HAVE--FUN!-----------------------------------

\section{Introduction}
A Fourier summation pair on a strip $\U:=\{z;|\Im{z}|<\sigma<\infty\}$ (FS-pair for short) is a pair $(\mu=\nu+\eta,a)$, where $\nu$ and $\eta$ are strongly tempered measures, with $\nu$ supported on $\r$, $\eta=\sum_{m\geq1} b(\ga_m)\del_{\ga_m}$ is a discrete measure supported in $\U\backslash\r$, and $a:\Lambda\rightarrow\cp$, $\Lambda=\{\la_n\}_{n\geq1}\subset\r$, is a locally summable function such that
\begin{align}
    \sum_{n\geq1} a(\la_n)\p(\la_n)=\int_{\r} \ft{\p}(t)\d\nu(t)+\sum_{m\geq1} b(\ga_m)\ft{\p}(\ga_m),
\end{align}
holds for any function $\p:\r\ra\cp$ that is $C^\infty$ and compactly supported, which will be called \textbf{test functions} throughout this paper. In particular, the Fourier transform $\ft{\p}$ can be extended to an entire function of finite exponential type, hence $\ft{\p}(\ga)$ is well-defined. We will adopt the following normalization of the Fourier transform: For $f\in L^1(\r)$
\begin{align*}
    \ft{f}(\xi):=\int_{\r} f(t)e^{-2\pi it\xi}\d t, \xi\in\r.
\end{align*}

We prove that, under certain natural real-antipodal and conjugation-symmetry assumptions, the pair $(\nu+\eta,a)$ is associated with a unique generating function $F$ in the upper half-plane $\cp^+:=\{z\in\cp;\Im{z}>0\}$ that is meromorphic with simple poles at the points $\{\ga_m\}_{m\geq1}$ and the respective residues $-b(\ga_m)/2\pi i$, and its boundary behavior, in the sense of generalized Nevanlinna representations, is described by the measure $\nu$. Moreover, $F$ is almost periodic in some half-plane $\Im{z}>c$ with a non-negative spectrum and Fourier coefficients determined by the function $a(\cd)$. Conversely, starting from any such function $F$ we can construct an FS-pair on a strip $(\nu+\eta,a)$: the measure $\nu$ encodes the boundary behavior of $F$ in the sense of Nevanlinna representation, the pure point measure $\eta$ encodes the poles and residues of $F$, and $a(\cd)$ is obtained from the Fourier coefficients of the almost periodic map $F$. All these definitions and the precise classification results are presented in full detail in the next section.

The passage from real-line support to strip support is not merely a perturbative extension of the real-line classification results in \cite{G23,GV}, where the case $\eta=0$ was considered. Indeed, the off-real atoms of $\eta$ give rise to poles of $F$ in $\cp^+$, so the boundary Nevanlinna representation alone no longer determines the full summation pair. Moreover, our new theorem proves that even when the off-real atoms produce poles that may approach the boundary, the generalized Nevanlinna component $\nu$ and the polar component $\eta$ remain separately and uniquely recoverable. To account for such atoms, we prove a new version of the Bridge Lemma that constructs $F$, and use a Cauchy-transform argument to recover the summation formula.

As an application of our results, we show that the corresponding Guinand-Weil explicit formula of every function in the Selberg class, including the non-self-dual case, fits into our framework, and we compute explicitly its associated generating function.

To the best of our knowledge, there is no previous general two-way classification of real-antipodal Fourier summation pairs whose measure is supported on a horizontal strip, allowing both a possibly non-discrete strongly tempered component on the real line and a discrete off-real component whose weights are not restricted to multiplicities. The results of the present paper provide such a classification in terms of meromorphic continuations of holomorphic almost-periodic functions and generalized Nevanlinna representations.

The works most closely related to the setting of the present paper are due to Favorov. In \cite{Fstrip}, he studied a particular class of discrete measures with unit masses supported on a horizontal strip and established a correspondence between their supports and the zero sets of exponential polynomials or, more generally, of absolutely convergent Dirichlet series with bounded spectrum. More recently, in \cite{FEntire}, Favorov considered the zero-counting measure of an entire almost-periodic function of exponential growth whose zeros lie in a horizontal strip. He proved that its generalized Fourier transform is a pure point measure on $\r$, with weights determined by the coefficients in the Fourier--Dirichlet expansions of the logarithmic derivative, and obtained a necessary and sufficient condition for the function to be, up to an exponential factor, a finite product of sine functions. Favorov and De\v{g}er \cite{FD} subsequently investigated growth properties of Fourier quasicrystals and measures supported on a strip. Although these works are closely related to our setting, our framework concerns general strongly tempered FS-pairs, permits arbitrary complex weights and a non-discrete component supported on the real line, and incorporates the latter through generalized Nevanlinna representations.

Finally, we give a brief overview of some recent results on crystalline measures and Fourier quasicrystals. We recall that a \textit{crystalline measure} on $\r$ is a complex-valued Borel measure of the form $\mu=\sum_{\ga\in\Gamma} a_\ga\del_\ga$, that defines a tempered distribution and its Fourier transform has the same form $\ft{\mu}=\sum_{\la\in\Lambda} b_\la\del_\la$, where $\Gamma,\Lambda\subset\r$ are discrete subsets, and $\del_\ga$ is the delta mass at $\ga$. We call $\mu$ a \textit{Fourier quasicrystal} (FQ) when both $|\mu|$ and $|\ft{\mu}|$ are also tempered distributions.

In 2015, Lev and Olevskii \cite{LO2} characterized the crystalline measures $\mu$ for which the supports of both $\mu$ and $\ft{\mu}$ are uniformly discrete\footnote{A subset $X\subset\r$ is uniformly discrete if $\inf_{x\neq y\in X} |x-y|\geq\delta>0$.} subsets of $\r$. Subsequently, Kurasov and Sarnak \cite{KS} presented a new way of generating FQs from Lee--Yang polynomials. Given a Lee--Yang polynomial\footnote{$P$ is Lee--Yang if it has no roots in $\mathbb{D}^n$ or $\left(\cp\backslash\ov{\mathbb{D}}\right)^n$, where $\mathbb{D}=\{z\in\cp;|z|<1\}$.} $P\in\cp[z_1,\ldots,z_n]$ and a vector $(\ell_1,\ldots,\ell_n)\in\r_+^n$, the function $f(x)=P(e^{ix\ell_1},\ldots,e^{ix\ell_n})$ is a real-rooted trigonometric polynomial\footnote{By a trigonometric polynomial, we mean a map $f(z)=\sum_{n=1}^{N} b_n e^{2\pi i\theta_nz}$, where $b_n\in\cp$ and $\theta_n\in\r$.}, and the measure
    \begin{equation}
    \label{eq:Kurasov_Sarnak_measure}
        \mu=\sum_{\ga;f(\ga)=0} m(\ga)\del_\ga
    \end{equation}
is an $\n$-valued FQ, where $m(\ga)$ denotes the multiplicity of the zero $\ga$. Subsequently, Olevskii and Ulanovskii \cite{OU} proved that the measures of the form \eqref{eq:Kurasov_Sarnak_measure}, with $f$ a real-rooted trigonometric polynomial, include all $\n$-valued FQs (see also \cite[Theorem 5]{G23} for a generalization in which the $\n$-valued condition is removed). Alon, Cohen, and Vinzant \cite{ACV} completed the characterization of one-dimensional $\n$-valued FQs by proving that every real-rooted trigonometric polynomial $f$ can be written as the restriction of a Lee--Yang polynomial, $f(x)=e^{i\la x}P(e^{ix\ell_1},\ldots,e^{ix\ell_n})$, for some $\la\in\r$, an integer $n>0$, a Lee--Yang polynomial $P\in\cp[z_1,\ldots,z_n]$, and a vector $\bo\ell\in\r_+^n$ whose entries are linearly independent over $\q$. Higher-dimensional versions of these results can be found in \cite{AKKV25,LT25}.

Fourier summation formulas restricted to the real-line, i.e., when $\eta=0$, were studied and classified in \cite{G23,GV}. Notable developments also include the new summation formulas of Bondarenko, Radchenko, and Seip \cite{BRS}, Kulikov, Nazarov, and Sodin \cite{KNS}, Radchenko and Viazovska \cite{RV}, and Ramos and Sousa \cite{RS,RS1}; the theory of Lee--Yang polynomials developed by Alon, Cohen, and Vinzant \cite{ACV,AV}; higher-dimensional FQs studied by Alon et al. and by Lawton and Tsikh \cite{AKKV25,LT25}; and the classification of crystalline pairs by Lev and Olevskii \cite{LO,LO2,LO3}, Meyer \cite{Mey23,Mey,Mey17}, and Olevskii and Ulanovskii \cite{OU,OU2}.

\section{Main results}

We begin by introducing some preliminary definitions. A complex-valued Borel measure $\mu$ supported on a strip $\{z\in\cp;|\Im{z}|<\sigma_\mu\}$, with $0\leq\si_\mu<\infty$, will be called \textbf{strongly tempered} if there exists an integer $k\geq0$, such that
\begin{equation}
    \int_{|\Im{z}|<\si_\mu} \frac{\d|\mu|(z)}{(1+|z|^2)^{k/2}}<\infty,
\end{equation}
where $|\mu|$ denotes the total variation of the measure $\mu$. In this case, we call the minimum of such $k$ the \textbf{order} of $\mu$:
\begin{equation}
    \ord{(\mu)}:=\min{\left\{k\in\z_{\geq0};\int_{|\Im{z}|<\si_\mu} \frac{\d|\mu|(z)}{(1+|z|^2)^{k/2}}<\infty\right\}}.
\end{equation}
In particular, the measure $\mu$ has at most countably many atoms\footnote{An atom of a measure $\mu$ is a singleton $\{x\}$ such that $\mu(\{x\})\neq0$.}. In this paper, we consider only strongly tempered measures $\mu$ on a strip that can be written as $\mu=\nu+\eta$, where $\nu$ is a strongly tempered measure on $\r$ and $\eta=\sum_{\ga\in A} b(\ga)\del_\ga$ is a strongly tempered discrete measure such that $A\cap\r=\varnothing$ and all finite limit points of $A$, if any, lie on $\r$. Thus, $\eta$ contains only point masses outside the real line; these may have limit points on $\r$, but not in $\cp\backslash\r$. The decomposition $\mu=\nu+\eta$ is then unique, and $\nu\perp\eta$. Moreover, $\mu$ is strongly tempered if and only if both $\nu$ and $\eta$ are strongly tempered, and $\ord(\mu)\leq k$ if and only if $\ord(\nu),\ord(\eta)\leq k$. We call such a measure $\mu$ \textbf{admissible}; whenever we write $\mu=\nu+\eta$, this decomposition is understood unless explicitly stated otherwise.

\begin{remark}
\label{remark_measure_decomposition}
The requirement $A\cap\r=\varnothing$ is only a convention concerning the decomposition $\mu=\nu+\eta$: all atoms of $\mu$ supported on $\r$ are included in $\nu$, while $\eta$ consists only of the atoms outside $\r$. This convention makes the decomposition unique and simplifies the statements of our main results. It imposes no additional restriction on the total measure $\mu$, the corresponding FS-pair, or its generating function $F$. In particular, when a discrete measure arising in an application has atoms on $\r$, these atoms are assigned to $\nu$ rather than to $\eta$, leaving $\mu$, the FS-pair, and $F$ unchanged.
\end{remark}

Given a test function $\p\in C^\infty_c(\r)$, suppose that $\supp{\p}\subset [-L,L]$. Then, for $t\in\r$, we have 
\begin{equation}
\label{eq:phi_entire_extension}
    \ft{\p}(t)=\int_{-L}^{L} \p(x)e^{-2\pi ixt}\d x.
\end{equation}
A simple application of Morera's theorem shows that, upon replacing $t\in\r$ by $z\in\cp$ in the above expression, the map $\ft{\p}$ extends to an entire function on $\cp$. For simplicity, we use $\ft{\p}$ to denote both the Fourier transform on $\r$ and its entire extension.

A function $a:\r\ra\cp$ will be called \textbf{locally summable} if its support $\supp{a}:=\{\la\in\r;a(\la)\neq0\}$ is a countable set, and for some enumeration $\supp{a}=\{\la_n;n\geq1\}$, the series 
\begin{equation}
    \sum_{n;\, \la_n\in[-T,T]} |a(\la_n)|
\end{equation}
is convergent for any $T>0$. In particular, the sum $\sum_{n\geq1} a(\la_n)\p(\la_n)$ is also absolutely convergent for any $\p\in C^\infty_c(\r)$. The function $a(\cd)$ will be said to be of \textbf{finite exponential growth} if there exists some $\al>0$ for which
\begin{equation}
    \sum_{n\geq1} |a(\la_n)|e^{-\al|\la_n|}<\infty.
\end{equation}
These properties are independent of the enumeration of $\supp{a}$. We therefore use abbreviated notation such as
\begin{align}
    &\sum_{\la\in\r} a(\la)\p(\la):=\sum_{n\geq1} a(\la_n)\p(\la_n),\text{ and } &\sum_{\la>0} a(\la)e^{2\pi i\la z}:=\sum_{n\geq1;\, \la_n>0} a(\la_n)e^{2\pi i\la_n z}.
\end{align}
We are now ready to state the definition of a Fourier summation pair on a strip:

\begin{definition}[Fourier summation pair on a strip]
A Fourier summation pair on a strip is a pair $(\mu,a)$ that satisfies the following properties
\begin{itemize}
    \item[i)] $a:\r\ra\cp$ is a locally summable function.
    \item[ii)] $\mu$ is an admissible, strongly tempered measure supported on a strip $\{z\in\cp;|\Im{z}|<\si_\mu<\infty\}$.
    \item[iii)] For any test function $\p\in C^\infty_c(\r)$ \begin{align}
\label{eq:FSpair_short_def}
    \sum_{n\geq1} a(\la_n)\p(\la_n)=\int_{|\Im{z}|<\si_\mu} \ft{\p}(z)\d\mu(z).
\end{align}
\end{itemize}
\end{definition}
Equivalently, we can write \eqref{eq:FSpair_short_def} as
\begin{align}
\label{eq:FSpair_def}
    \sum_{n\geq1} a(\la_n)\p(\la_n)=\int_{\r} \ft{\p}(t)\d\nu(t)+\sum_{\ga\in A} b(\ga)\ft{\p}(\ga),
\end{align}
for the decomposition $\mu=\nu+\eta$. In order to ease the notation, we will refer to `FS-pairs on a strip' as just `FS-pairs'. Moreover, sometimes we will simply say that $(\mu,a)$ is an FS-pair on the closed strip $|\Im{z}|\leq\sigma$. In this case, we mean that $(\mu,a)$ is an FS-pair in the above definition on the open strip $|\Im{z}|<\sigma+\ep$ for all $\ep>0$.

To simplify the statements of our main results, we restrict attention to a subclass of FS-pairs on a strip. We say that the discrete measure $\eta=\sum_{\ga\in A} b(\ga)\del_\ga$ is \textbf{symmetric with respect to $\r$}, or $\r$-symmetric, if $\ga\in A$ precisely when $\ov{\ga}\in A$ and $b(\ov{\ga})=b(\ga)$ for any $\ga\in A$. We call $(\mu=\nu+\eta,a)$ an $\r$-symmetric FS-pair when $\eta$ is $\r$-symmetric. We consider such FS-pairs for two reasons: the Guinand--Weil explicit formulas for functions in the Selberg class have this symmetry, and it allows us to state our results for real-antipodal FS-pairs (see below).
\subsection*{Real antipodal splitting} A function $a:\r\ra\cp$ is called \textbf{antipodal} if $\ov{a(\la)}=a(-\la)$ for every $\la\in\r$. We call an FS-pair on a strip $(\mu,a)$ \textbf{real-antipodal} if $\mu$ is real-valued and $a(\cdot)$ is antipodal. Any $\r$-symmetric FS-pair $(\mu=\nu+\eta,a)$ can be split into two $\r$-symmetric FS-pairs $(\mu_1,a_1)$ and $(\mu_2,a_2)$ on the same strip. Namely, set $\mu_1:=\Re{\mu}=\Re{\nu}+\Re{\eta}$, $\mu_2:=-\Im{\mu}=-\Im{\nu}-\Im{\eta}$, $a_1(\la):=\left(a(\la)+\ov{a(-\la)}\right)/2$, and $a_2(\la):=-i\left(\ov{a(-\la)}-a(\la)\right)/2$. To verify that $(\mu_j,a_j)$ is an FS-pair, by linearity it suffices to consider antipodal test functions $\p$. For such functions, $\ft{\p}(w)+\ft{\p}(\ov{w})$ is real-valued; combining this with $b(\ov{\ga})=b(\ga)$ proves the claim. The original pair $(\mu,a)$ can be recovered from $(\mu_1,a_1)$ and $(\mu_2,a_2)$. We may therefore state our results only for $\r$-symmetric, real-antipodal FS-pairs.

We now briefly introduce two special classes of functions that will play a key role in our main results. We provide more details in Sections \ref{section:almost_periodic_class} and \ref{section:Nevanlinna_class}.
\subsection*{The almost periodic class} We say that a continuous function $f:\r\ra\cp$ is \textbf{almost periodic} if, for any $\ep>0$, there exists a set of translations $\tau_\ep\subset\r$, which is relatively dense\footnote{This means that there exists $l>0$ such that $\tau\cap(x,x+l)\neq\emptyset$ for any $x\in\r$.}, and $\sup_{x\in\r} {|f(x+t)-f(x)|}<\ep$, for any $t\in\tau_{\ep}$. The class of \textbf{holomorphic almost periodic} functions on the upper half-plane $\cp^+$ is the set of holomorphic maps $F$ on $\cp^+$ such that $F$ is bounded on strips, i.e., 
\begin{align*}
    \sup_{\al<\Im{z}<\beta} |F(z)|<\infty
\end{align*}
for any $0<\al<\beta<\infty$, and for each $y>0$, the function $x\mt F(x+iy)$ is almost periodic when seen as a map defined on $\r$. We denote this class by $\A\P(\cp^+)$.
For $F\in\A\P(\cp^+)$, we can define an analog of a Fourier coefficient\footnote{Throughout this paper, we will call $\e F(\la)$ the Fourier coefficient of $F$. In addition, any exponential series of the form $\sum_{n\geq0} a_n e^{2\pi i\la_n z}$, with frequencies $\la_n\in\r$, will be called a 'Fourier series'.}, also known as `Bohr-Fourier' coefficient: Given any $\la\in\r$, the limit
\begin{align*}
\e F(\la):=\lim_{T\ra\infty} \frac{1}{2T} \int_{-T+iy}^{T+iy} F(z)e^{-2\pi i\la z}dz
\end{align*}
does exist and is independent of $y>0$. In the special case where $z\mt F(z+ic)$ is an almost periodic function of $z$, the above limit exists and does not depend on $y>c$. The spectrum of $F$ is defined by $\spec(F):=\{\la\in\r;\e F(\la)\neq0\}$, which is a countable set.

\subsection*{The Nevanlinna class} The class of generalized holomorphic Nevanlinna functions of order $\leq k$, which will be denoted by $\NN_{\leq_k}$, is the set of all holomorphic maps $F:\cp^+\ra\cp$ that satisfy the following property: for any $N>0$ and any choice of $z_1,...,z_N \in \cp^+$, the Hermitian matrix
\begin{align*}
\left[i\frac{F(z_n)+\ov{F(z_m)}}{z_n-\ov{z_m}}\right]_{1\leq n,m\leq N}
\end{align*}
has at most $k$ negative eigenvalues (counting multiplicities). Our main results will make use of the set of differences $\NN_{\leq k}-\NN_{\leq k}:=\{F-G;F,G\in\NN_{\leq k}\}$, also known as the space of quasi-Herglotz functions. For simplicity, we will continue to refer to such functions as generalized Nevanlinna functions.

We now discuss the intuition behind our main results and explain their connection with Nevanlinna and almost periodic maps. To clarify the main ideas, we will be deliberately informal and omit technical details. Let $F:\cp\ra\cp$ be a meromorphic map such that
\begin{itemize}
    \item[1)] $F(z)$ can be represented as
    \begin{align*}
    F(z)=\sum_{n\geq0} b_ne^{2\pi i\la_nz},&\,\, \text{ for } \Im{z}>\al_1\\
    F(z)=\sum_{m\geq0} c_me^{-2\pi i\xi_mz},&\,\, \text{ for } \Im{z}<-\al_2,
    \end{align*}
for some $0<\al_1,\al_2<\infty$, and $\la_n,\xi_m\geq0$. Assume that both expressions converge uniformly in these domains, and
    \item[2)] all singularities of $F(z)$ are simple poles and are contained on a horizontal strip $\U:=\{z;|\Im{z}|<\sigma\}$, for some $\sigma\leq\min\{\al_1,\al_2\}$ . Let $\{\ga_n\}_{n\geq1}$ be the set of such poles.
\end{itemize}

The (normalized) Mellin transform of a function $\psi\in C^\infty_c(0,\infty)$ is $\M\psi(s):=\int_{0}^{\infty} \psi(x)x^{2\pi s}\frac{\d x}{x}$, which is an entire function that rapidly decays as $|\Im(s)|\ra\infty$ for fixed $\Re{s}$. Now, for a test function $\p\in C^\infty_c(\r)$, we consider the integral
\begin{equation}
\label{eq:integral_example}
    \int_{C_R} F(z)\M(\p\circ\log)(-iz)\d z,
\end{equation}
where $C_R$ is the rectangle with vertices $R+2i\al_1$, $-R+2i\al_1$, $-R-2i\al_2$, and $R-2i\al_2$. We evaluate this integral in two ways: using the residue theorem and using the two Fourier expansions of $F$. Letting $R\ra+\infty$ and assuming that the integrals on the vertical segments tend to zero, we obtain\footnote{After multiplying both sides of the equation by $-1$, so that the sign matches the convention used in our main results.}
\begin{equation}
    -2\pi i\sum_{j\geq0} \left(\res{\ga_j}F\right)\ft{\p}(\ga_j)=\sum_{n\geq0} b_n\p(\la_n)-\sum_{m\geq0} c_m\p(-\xi_m),
\end{equation}
that is, $(\mu,a)$ is an FS-pair on the strip $\U$, where $\mu:=-2\pi i\sum_{j\geq0} \left(\res{\ga_j}F\right)\del{\ga_j}$, $a(\la_n):=b_n$, $a(-\xi_m):=-c_m$, $a(0):=b_{n_0}-c_{m_0}$ when $\xi_{m_0}=\la_{n_0}=0$, and $a$ is zero otherwise. Thus, the Fourier representations of $F$ determine $a(\cdot)$, while its singularities determine $\mu$. This construction was used by Kurasov and Sarnak in \cite{KS}, by Favorov in \cite{F24,Fstrip,FEntire} and can also be found in \cite{BRS}.

The same construction works if we start with a meromorphic function $F$ defined on $\cp^+$, with a Fourier series expansion $F(z)=\sum_{n\geq0} b_n e^{2\pi i\la_nz}$ on some half-plane $\Im{z}>\al_1$, and with singularities determined by simple poles located on a strip $0<\Im{z}<\si$ together with its boundary behavior on $\r$. This is the idea of our Theorem \ref{thm:converse_main}. 

Since we are interested in starting with the FS-pair $(\mu,a)$ and obtaining such a generating function $F$, and since $\mu$ encodes the simple poles and boundary behavior of $F$, we would like to find a way to construct $F$ from its singularities. Clearly, there is no unique way to construct such an $F$, since $F+G$ has the same singularities as $F$ for any entire function $G$.

Let us first assume that $F$ is holomorphic on $\mathbb{C}^+$. Under suitable growth conditions, $F$ can be reconstructed from its boundary values. One such setting is the Hardy space $\mathcal{H}^p(\mathbb{C}^+)$, $1\leq p<\infty$, in which a function $F\in\mathcal{H}^p(\mathbb{C}^+)$ is determined by its non-tangential boundary values $F(t)\in L^p(\mathbb{R})$ through the formula $F(z)=\frac{1}{2\pi i}\int_{\r} \frac{F(t)}{t-z}\d t$. It is possible to go further and consider the case in which the boundary behavior of $F$ is represented by a positive strongly tempered measure $\nu$ with $\ord(\nu)\leq 2(k+1)$. In this case, the expression

\begin{equation}
\label{eq:Nevanlinna_motivation}
    F(z)=\frac{(z^2+1)^k}{2\pi i}\int_{\r} \frac{1+tz}{t-z}\frac{d\nu(t)}{(1+t^2)^{k+1}}
\end{equation}
defines a holomorphic function on $\mathbb{C}^+$ whose boundary behavior is encoded by $\nu$. The class of functions that admit a representation of the form \eqref{eq:Nevanlinna_motivation} for such a measure $\nu$ is called the generalized Nevanlinna class of index $\leq k$. This naturally leads to the appearance of the Nevanlinna class in our results.

Observe that we can go further and consider in \eqref{eq:Nevanlinna_motivation} an admissible, strongly tempered measure $\mu=\nu+\eta$ supported on $|\Im{z}|<1$, thus obtaining a meromorphic function

\begin{equation}
    G(z)=\frac{(z^2+1)^k}{2\pi i}\int_{\r} \frac{1+tz}{t-z}\frac{d\nu(t)}{(1+t^2)^{k+1}}+\frac{(z^2+1)^k}{2\pi i}\sum_{\ga\in A} \frac{1+\ga z}{\ga-z}\frac{b(\ga)}{(1+\ga^2)^{k+1}}.
\end{equation}
This is the idea of Theorem \ref{thm:main}. Given an FS-pair on a strip $(\mu=\nu+\eta,a)$ define $F(z):=a(0)/2+\sum_{\la>0} a(\la)e^{2\pi i\la z}$ on a half-plane $\Im{z}>\al_1$. Then $F$ extends to $\cp^+$ as a meromorphic function and, for a suitable real polynomial $Q$, it satisfies $F(z)=G(z)+iQ(z)$. The function $G$ is no longer a Nevanlinna function. For this reason, throughout the text we decompose $G$ into its regular and singular parts. In this way, $F$ encodes the information of the FS-pair: its Fourier coefficients are given by the function $a(\cdot)$, the measure $\eta$ encodes its poles and residues, and the measure $\mu$ encodes its boundary behavior on $\mathbb{R}$.

Conversely, Theorem \ref{thm:converse_main} states that, starting from any such function $F$, one can recover an FS-pair on a strip $(\nu+\eta,a)$. The function $a(\cdot)$ is obtained from the Fourier coefficients of the almost periodic function $F$, while $\eta$ is a sum of point masses supported at the poles of $F$, with coefficients determined by the corresponding residues. Finally, after removing the singular part of $F$, the measure $\nu$ is uniquely determined by the Herglotz-Nevanlinna representation. More precisely, we have the following.
\vspace{3mm}

\begin{theorem}[Classification of FS-pairs on a strip]
\label{thm:main}
Let $(\mu=\nu+\eta,a)$ be a real-antipodal FS-pair on a strip $\{z;|\Im{z}|<\si_\mu\}$, which is $\r$-symmetric, $\ord{(\mu)}\leq 2(k+1)$ for some $k\geq0$, and $a(\cd)$ has finite exponential growth. Then, the expression
\begin{align}
F(z):=\frac{1}{2}a(0)+\sum_{\la>0} a(\la)e^{2\pi i\la z},
\end{align}
defines a holomorphic almost periodic map on the half-plane $\Im{z}>c$, for some $c>0$ depending on the exponential growth of $a(\cd)$. Moreover, $F$ extends as a meromorphic map to $\cp^+$ as $F(z)=N(z)+S(z)$ for some Nevanlinna function $N\in\NN_{\leq k}-\NN_{\leq k}$ and some meromorphic map $S(z)$ given explicitly by 
\begin{equation}
\label{eq:F_formula}
F(z)= \frac{(z^2+1)^k}{2\pi i} \int_{\r} \frac{1+tz}{t-z}\cd\frac{d\nu(t)}{(1+t^2)^{k+1}}+iQ_\rho(z)+\frac{(z^2+\rho^2)^k}{2\pi i} \sum_{\ga\in A} \frac{\rho^2+\ga z}{\ga-z}\cd\frac{b(\ga)}{(\rho^2+\ga^2)^{k+1}}.
\end{equation}
Above, $\rho>\si_\mu$ is any parameter, and $Q_\rho(z)$ is a real polynomial of degree $\leq 2k$, which depends on $\rho$.
\end{theorem}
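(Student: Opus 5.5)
The plan is to compare two functions on a half-plane $\Im z>c$: the function $F$ defined by the Fourier series, and the function $G_\rho$ defined by the right-hand side of \eqref{eq:F_formula} without the term $iQ_\rho$. We will show they differ by $i$ times a real polynomial of degree $\leq 2k$. Meromorphic continuation of $F$ to $\cp^+$ then follows, because $G_\rho$ is meromorphic on $\cp^+$.

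\textbf{Step 1: $F$ on a half-plane.} Finite exponential growth, $\sum|a(\la)|e^{-\al|\la|}<\infty$, gives absolute and uniform convergence of $\sum_{\la>0}a(\la)e^{2\pi i\la z}$ on $\Im z\geq c$ for any $c>\al/2\pi$. So $F$ is bounded there, and as a uniform limit of trigonometric polynomials it is almost periodic on each horizontal line. Hence $F(\cdot+ic)\in\A\P(\cp^+)$ with $\e F(\la)=a(\la)$ for $\la>0$ and $\e F(0)=a(0)/2$.

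\textbf{Step 2: the strip Bridge Lemma.} For $\Im z$ large, test the FS-identity \eqref{eq:FSpair_def} against a kernel that reproduces $F$. Concretely, take $\p_z(x)=e^{2\pi i x z}\1_{x>0}+\tfrac12\del_0$, suitably regularized. Since this is not a test function, I would instead apply the identity to $\p_\ep=\p_z\ast\psi_\ep$ (mollified, multiplied by a cutoff $\chi(x/T)$) and pass to the limit.

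The right-hand side produces the Cauchy-type kernel $\frac{1}{2\pi i}\frac{1}{t-z}$ evaluated on $\nu$ and at the atoms $\ga$. This is legitimate since $\Im z>\si_\mu>|\Im\ga|$, so $\ft{\p_z}(w)=\frac{1}{2\pi i (w-z)}$ remains holomorphic and decays like $|w|^{-1}$ on the strip. However, convergence only holds after subtracting a Taylor polynomial, because $\mu$ has order up to $2(k+1)$.

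To handle this, I would use test functions whose Fourier transforms are $\frac{(w^2+\rho^2)^k(\rho^2+wz)}{(w-z)(w^2+\rho^2)^{k+1}}$-type kernels. These are obtained from $\p_z$ by applying the differential operator $(\rho^2-D^2)^{-k-1}$-style corrections, i.e.\ finite combinations of $\p_{z}$, of derivatives in $z$, and of the points $\pm i\rho$. The parameter $\rho>\si_\mu$ is chosen so that $\pm i\rho$ lie off the strip, which keeps the correction kernels holomorphic on $\supp\mu$. With these kernels the integrals against $|\mu|$ converge absolutely by the order hypothesis.

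Carrying this out yields $F(z)=G_\rho(z)+P(z)$, where $P$ comes from the correction terms. Each correction term is a value at $\pm i\rho$ of a $z$-polynomial factor times constants built from $\mu$ and $a$, so $P$ is a polynomial of degree $\leq 2k$. The realness of $-iP=Q_\rho$ comes from the real-antipodal and $\r$-symmetric assumptions. Under these, the combination $\ft{\p}(w)+\ft{\p}(\ov w)$ pairs with $b(\ga)=b(\ov\ga)$, and the real measure $\nu$ makes the constants conjugate-symmetric.

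This step is the main obstacle: justifying the limit of mollified and truncated test functions, in particular controlling $\sum a(\la)\p(\la)$ for the truncated exponential. I would first try the cleaner route of proving the identity for $\Im z$ large by uniqueness of almost periodic expansions. Both $F$ and $G_\rho$ are shown to be holomorphic of polynomial growth in vertical strips, and their difference is shown to pair trivially with all test functions of the form $\ft{\p}$, which forces it to be a polynomial.

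\textbf{Step 3: decomposition and meromorphic continuation.} On $\cp^+$, the sum over $\ga\in A$ with $\Im\ga>0$ contributes simple poles at those $\ga$ with residue $-b(\ga)/2\pi i$. Its terms with $\Im\ga<0$ are holomorphic in $\cp^+$. Convergence locally uniformly away from $A$ follows from the strong temperedness of $\eta$ together with the fact that $A$ has no finite limit points off $\r$. So $S$, the sum over $\Im\ga>0$, is meromorphic, and we set $N:=G_\rho-S+iQ_\rho$.

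The $\nu$-integral part of $N$ lies in $\NN_{\leq k}-\NN_{\leq k}$. To see this, split the real measure $\nu=\nu_+-\nu_-$ into positive parts and use the generalized Nevanlinna representation recalled in Section~\ref{section:Nevanlinna_class}. The remaining pieces are the $\Im\ga<0$ atoms, which are paired with their conjugates by $\r$-symmetry, and the polynomial $iQ_\rho$. These are holomorphic, with the right symmetry and polynomial growth, and can likewise be written as differences of such functions.
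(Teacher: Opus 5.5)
Your route differs from the paper's, and its central idea works.

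\textbf{How the two routes differ.} The paper never takes the inverse transform of the Nevanlinna kernel itself. It uses the two-variable kernel $G_k(w,z,\cdot)$, with $\ft{G_k}(w,z,t)=\frac{1}{2\pi^{k+1}i}\frac{1}{(t-z)(t-\ov{w})(1+t^2)^k}$, and proves the strip Bridge Lemma for it. That proof uses truncation by $\ft{S_T}$, mollification, and the uniform bound of Lemma~\ref{lemma:asymptotic_decay}. The paper then reads the $a$-side through the explicit formula for $G_k$ as $R(z)+\ov{R(w)}$, with correction terms built from $F^{(l)}(i)$. Realness of $Q$ is structural there: setting $z=w$ in $R(z)+\ov{R(w)}=H(z)+\ov{H(w)}$ and applying Cauchy--Riemann leaves only an unknown real constant. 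The rescaling Lemma~\ref{lemma:simple_proof_thm_main} puts $A$ in $|\Im z|<1/2$ and makes $F$ and the $F^{(l)}(i)$ converge.

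You instead test against the inverse transform of the one-variable kernel $K_z(t)=\frac{(z^2+\rho^2)^k(\rho^2+tz)}{2\pi i\,(t-z)(\rho^2+t^2)^{k+1}}$. Its numerator must be $(z^2+\rho^2)^k$; your displayed $(w^2+\rho^2)^k$ decays only like $|w|^{-2}$, which is too slow for $k\geq 1$. By \eqref{eq:identity_change_parameter}, $K_z(t)=\frac{1}{2\pi i}\frac{1}{t-z}-\frac{1}{2\pi i}\sum_{l=0}^{2k}z^l r_l(t)$, where the $r_l$ are real rational functions with poles only at $\pm i\rho$. Hence $\check{K}_z(x)=e^{2\pi ixz}\1_{x>0}-\frac{1}{2\pi i}\sum_l z^l f_l(x)$, with $|f_l(x)|\ll(1+|x|)^k e^{-2\pi\rho|x|}$.

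After the same regularization as in the paper, the $\mu$-side is exactly $G_\rho(z)$, and the $a$-side is $F(z)$ minus a polynomial of degree $\leq 2k$. You need no second variable and no division by $(1+z^2)^k$, and $Q_\rho$ comes out explicitly in terms of $a$. The value $1/2$ at $x=0$ arises automatically from the continuity of $\check{K}_z$. So drop the $\frac12\del_0$, which would also destroy the decay of $\ft{\p_z}$.

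\textbf{Two points to repair.} First, the realness of $Q_\rho$ does not come from the mechanism you describe. The $\mu$-side yields exactly $G_\rho$ and contributes nothing to the polynomial. The polynomial's coefficients are the $a$-side sums $c_l=\frac{1}{2\pi i}\sum_{\la}a(\la)f_l(\la)$. These are purely imaginary because $r_l$ is real, so $\ov{f_l(x)}=f_l(-x)$, and $a$ is antipodal; together these give $\ov{c_l}=-c_l$. For instance, when $k=0$ one gets $\check{K}_z(x)=e^{2\pi ixz}\1_{x>0}-\frac12\operatorname{sgn}(x)e^{-2\pi\rho|x|}$. With parameter $\rho$ in both terms, this gives $Q_\rho=\Im\sum_{\la>0}a(\la)e^{-2\pi\rho\la}$.

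Second, $\rho>\si_\mu$ is what the $\mu$-side needs, but the $a$-side needs more. Splitting $\sum_\la a(\la)\check{K}_z(\la)$ into $F(z)$ plus the correction sums requires $\sum_\la |a(\la)|(1+|\la|)^k e^{-2\pi\rho|\la|}<\infty$, i.e.\ $2\pi\rho>\al$. This does not follow from $\rho>\si_\mu$. So run the argument with $\rho$ large (and $\Im z>\al/2\pi$). Then pass to an arbitrary $\rho>\si_\mu$ in the $\eta$-sum, and to parameter $1$ in the $\nu$-integral, via \eqref{eq:identity_change_parameter}. By $\r$-symmetry this adds only real polynomials of degree $\leq 2k$, and it plays the role of the paper's rescaling lemma.

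\textbf{Smaller points.} Drop the fallback ``uniqueness'' route; your main route does not need it. As phrased it cannot work: a tempered function on a horizontal line whose pairing with every $\ft{\p}$, $\p\in C^\infty_c(\r)$, vanishes is identically zero, not a polynomial. You also do not say how the pairing of $F-G_\rho$ would be computed.

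In Step 3, the decomposition intended by the statement takes $S$ to be the full sum over $A$, and $N$ to be the $\nu$-integral plus $iQ_\rho$. Then $N\in\NN_{\leq k}-\NN_{\leq k}$ follows at once from the Jordan decomposition of $\nu$ and the converse part of \eqref{eq:Nevanlinna_fact_k}. Moving the atoms with $\Im\ga<0$ into $N$ is unnecessary. It would also need a separate argument, since being holomorphic with polynomial growth is not a criterion for membership in $\NN_{\leq k}-\NN_{\leq k}$.
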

The next result provides a converse to Theorem \ref{thm:main} and shows how to construct an FS-pair on a strip starting from such generating functions.
\begin{theorem}[Constructing FS-pairs]
\label{thm:converse_main}
Let $F(z)$ be a meromorphic map in $\cp^+$ such that $z\mt F(z+ic)\in\A\P(\cp^+)$ for some $c>0$ and the function $\la\mt\e F(\la)$ is locally summable. Suppose that the singularities of $F$ are all simple poles, denoted by $\ga_n$ and respective residues $-\frac{b(\ga_n)}{2\pi i}$, satisfying $b(\ga_n)\in\r$ and
\begin{equation}
\label{eq:convergence_eta_series}
    \sum_{n\geq0} \frac{|b(\ga_n)|}{(1+|\ga_n|^2)^{m+1}}<\infty
\end{equation}
for some $m\in\z_{\geq0}$. Moreover, assume that $F-S\in\NN_{\leq j}-\NN_{\leq j}$, for some $j\in\z_{\geq0}$, where we define
\begin{equation}
\label{eq:S_representation}
    S(z):=\frac{(z^2+\rho^2)^m}{2\pi i} \sum_{\ga\in A} \frac{\rho^2+\ga z}{\ga-z}\cd\frac{b(\ga)}{(\rho^2+\ga^2)^{m+1}},
\end{equation}
with $A:=\{\ga_n\}_{n\geq0}\cup\{\ov{\ga_n}\}_{n\geq0}$, $b(\ov{\ga_n}):=b(\ga_n)$ and some parameter $\rho>c$.

Then, $(\nu+\eta,a)$ is a real-antipodal $\r-$symmetric FS-pair on the strip $\{z\in\cp;|\Im{z}|\leq c\}$, where $\eta:=\sum_{\ga\in A} b(\ga)\del_{\ga}$, $\nu$ is the (unique) real-valued measure from the Herglotz-Nevanlinna representation of $F-S$ and 
\begin{equation}
\label{eq:def_function_a}
a(\la):=\begin{cases}
\e F(\la) & \text{if } \la>0,\\
2\Re \e F(0) & \text{if } \la=0,\\
\ov{\e F(-\la)} & \text{if } \la<0.
\end{cases}
\end{equation}
If, in addition, $\la\mt\e F(\la)$ has finite exponential growth, then so does $a(\cd)$.
\end{theorem}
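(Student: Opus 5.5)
Our plan is to prove Theorem \ref{thm:converse_main} by the contour-integral construction sketched in the introduction, now carried out in $\cp^+$ only, and to use the symmetry assumptions to produce the negative frequencies. First, we split $F=N+S$ with $N:=F-S\in\NN_{\leq j}-\NN_{\leq j}$. Since $F-S$ is holomorphic on $\cp^+$, the function $S$ has exactly the poles $\ga_n$ with residues $-b(\ga_n)/2\pi i$; the conjugate atoms $\ov{\ga_n}$ lie in $\cp^-$ and add only holomorphic terms on $\cp^+$. Each of $N$ and $S$ will be handled separately. By the generalized Herglotz--Nevanlinna representation (Section \ref{section:Nevanlinna_class}), $N$ has the form \eqref{eq:Nevanlinna_motivation} with a unique real strongly tempered measure $\nu$ of order $\leq 2(j+1)$, plus $iQ$ for a real polynomial $Q$. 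The series \eqref{eq:convergence_eta_series} shows that $\eta$ is a real, $\r$-symmetric, strongly tempered measure. Since $c$ exceeds all $\Im\ga_n$ (they lie below the half-plane of almost periodicity), $\mu=\nu+\eta$ is admissible on $|\Im z|\leq c$.

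Second, we fix an antipodal-reduced test function $\p$ with $\supp\p\subset[-L,L]$. We integrate $F(z)\ft{\p}(-z)$, or more conveniently $\Re$-symmetrized kernels, over the boundary of the rectangle with horizontal sides $\Im z=y_0>c$ and $\Im z=\ep$, and vertical sides $\Re z=\pm R$. We then let $R\ra\infty$ along a sequence avoiding poles, and afterwards let $\ep\ra0^+$. On the top line we use the Bohr--Fourier theory for $z\mt F(z+ic)\in\A\P(\cp^+)$: pairing with $\ft{\p}$, which decays faster than any polynomial on horizontal lines, gives $\sum_{\la}\e F(\la)\p(\la)$. This uses the local summability of $\e F$, and we first justify it on trigonometric-polynomial (Bochner--Fejér) approximants and then pass to the limit. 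The residues inside the rectangle give $\sum b(\ga_n)\ft{\p}(\ga_n)$. The bottom line, as $\ep\ra0$, splits into the $N$-part and the $S$-part. For $N$, the distributional boundary value $N(t+i0)+\ov{N(t+i0)}$ is $\nu$ (up to normalization), which recovers $\int\ft{\p}\,\d\nu$; the polynomial $iQ$ contributes nothing after the antipodal symmetrization, since $\Re(iQ)=0$ on $\r$. For $S$, a direct Cauchy-transform computation term by term is justified by \eqref{eq:convergence_eta_series} and the $(z^2+\rho^2)^m$ normalization. The boundary term of the $\ga$ and $\ov{\ga}$ atoms together then reproduces $\sum b(\ov{\ga_n})\ft{\p}(\ov{\ga_n})$, and the polynomial corrections cancel for the same reason.

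To obtain the negative frequencies and the formula \eqref{eq:def_function_a}, we repeat the argument with $\p$ replaced by $\ov{\p(-\cdot)}$ and take complex conjugates, using the facts that $b$ and $\nu$ are real and $\eta$ is $\r$-symmetric. Adding the two identities gives $a(\la)=\e F(\la)$ for $\la>0$, $\ov{\e F(-\la)}$ for $\la<0$, and $2\Re\e F(0)$ at $0$. The antipodality of $a$ and the reality of $\mu$ are then immediate. Finite exponential growth transfers trivially from $\e F$ to $a$.

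The main obstacle is controlling the vertical sides and the passage $\ep\ra0$ uniformly. Near $\r$, $F$ may grow polynomially (order $\leq 2(j+1)$, so like $|z|^{2j+1}/\Im z^{\,\ell}$), and poles may accumulate at the real line. Our first attempt is to replace $\ft{\p}$ by $\ft{\p}$ times $(z^2+\rho^2)^{-M}$ for large $M$, equivalently to apply the identity to $(1-\partial^2/4\pi^2\rho^2)^M$-type derivatives of $\p$. This gives enough decay that the vertical integrals vanish and dominated convergence applies to both the Nevanlinna integral and the series for $S$. We then strip off the factor using the fact that it acts as a differential operator on $\p$. The rapid decay of $\ft{\p}$ along horizontal lines in a fixed strip, uniformly in $0<\Im z\leq y_0$, is what makes this work.
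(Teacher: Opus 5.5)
Your route differs from the paper's. You shift the line $\Im{z}=y_0$ down to $\r$, collect residues at the $\ga_n$, read $\nu$ off the boundary values of $N=F-S$, and then symmetrize with $\ov{\p(-\cdot)}$. The paper instead pairs $\ft{\p}(s)$ with translates $H(z+s)+\ov{H(-\ov{z}+s)}$ and sends the translation parameter $z\to0$ by analytic continuation. Your route could work, but as written it has a genuine gap: you never prove that $\e F(\la)=0$ for $\la<0$. Pairing the Bochner approximants with $\ft{\p}$ on the top line gives $\sum_{\la\in\r}\e F(\la)\p(\la)$ over all real $\la$. After adding the conjugated identity for $\ov{\p(-\cdot)}$, the coefficient at $\la\neq0$ is therefore $\e F(\la)+\ov{\e F(-\la)}$. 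This agrees with \eqref{eq:def_function_a} only if $\spec(F)\subset[0,\infty)$, and that does not follow from $F(\cdot+ic)\in\A\P(\cp^+)$. For example, $e^{-2\pi iz}$ satisfies every hypothesis except the Nevanlinna one and has spectrum $\{-1\}$. So the hypothesis $F-S\in\NN_{\leq j}-\NN_{\leq j}$ must be used a second time, as a growth bound at $i\infty$. The paper does this as follows: the representation gives $|F(z)|=O(|z|^{2k+2})$ on $\Im{z}\geq 2c$, and $F$ is bounded on the line $\Im{z}=2c$ by almost periodicity. A Phragm\'en--Lindel\"of theorem then makes $F$ bounded on $\Im{z}>2c$, and Lemma \ref{lemma:bounded_ap_function} kills the negative frequencies.

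Your control of the contour is also inadequate as stated. Choosing $R$ ``avoiding poles'' gives no bound on $\int_{\ep}^{y_0}|F(\pm R+iy)|\,\d y$. The hypotheses control only the weighted sum \eqref{eq:convergence_eta_series}, not how many poles lie near $\Re{z}=\pm R$ or how close they come, and the bottom line $\Im{z}=\ep$ has the same problem. Dividing $\ft{\p}$ by $(z^2+\rho^2)^M$ does not help. First, $\ft{\p}$ already decays faster than any polynomial uniformly in the strip, and the real difficulty is proximity to poles. Second, $(z^2+\rho^2)^{-M}\ft{\p}$ is not the transform of a test function, so the factor cannot be undone by a differential operator; only multiplication by $(z^2+\rho^2)^M$ corresponds to one. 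A simple repair is to shift contours only for the holomorphic part $N$, whose growth $O((1+|z|)^M/\Im{z})$ makes the vertical sides harmless. Then treat $S$ one simple fraction at a time: write $\rho^2+\ga t=(\rho^2+t^2)+t(\ga-t)$ and apply Lemma \ref{lemma:Cauchy_integral} to $\Phi:=(\rho^2-\partial^2/(4\pi^2))^{m+1}\p$. This gives bounds uniform in $\ga$, so \eqref{eq:convergence_eta_series} justifies the summation. Note also that the one-sided identity does not produce $\sum b(\ov{\ga_n})\ft{\p}(\ov{\ga_n})$. Together with the residue, every $\ga\in A$ contributes $b(\ga)(\rho^2+\ga^2)^{-m-1}\int_0^\infty\Phi(x)e^{-2\pi i\ga x}\,\d x$, plus a term that cancels under symmetrization. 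Only your symmetrization step completes these half-line transforms to $\sum_{\ga\in A}b(\ga)\ft{\p}(\ga)$.
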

\begin{remark}
In Theorem \ref{thm:converse_main}, if $F-S\in\NN_{\leq j}-\NN_{\leq j}$ for one $\rho>c$, then this condition holds for every $\rho>c$. Moreover, the integer $m$ could be replaced for any other $m'\in\z_{\geq0}$ for which \eqref{eq:convergence_eta_series} converges and we would still have $F-S_{m'}\in\NN_{\leq j'}-\NN_{\leq j'}$ for some $j'\in\z_{\geq0}$, where $S_{m'}$ denotes \eqref{eq:S_representation} for $m'$; see the discussion at the end of Section \ref{section:Nevanlinna_class}. Finally, the measure $\nu$ obtained from the Herglotz-Nevanlinna representation of $F-S$ satisfies $\ord(\nu)\leq 2(j+1)$. 
\end{remark}

Now, let $\F\S$ be the class of the $\r-$symmetric real-antipodal FS-pairs on a strip $(\mu,a)$ such that $a(\cd)$ is a function of finite exponential growth. Let $\A\P^*(\cp^++ic),c>0$ be the set of all meromorphic maps $F$ on $\cp^+$ such that $F(\cd+ic)\in\A\P(\cp^+)$, $\e F(0)\in\r$ and $\la\mt\e F(\la)$ is a function of finite exponential growth. For $m\in\z_{\geq0}$, we also denote by $\MM_{\leq m}$ the set of all meromorphic functions $S$ in $\cp^+$ that can be represented as \eqref{eq:S_representation} for some discrete set $A\subset\{z;|\Im{z}|<c_S<\infty\}$, such that the measure $\eta:=\sum_{\ga\in A} b(\ga)\del_\ga$ is real-valued, $\r-$symmetric and strongly tempered with $\ord(\eta)\leq 2(m+1)$, $A\cap\r=\varnothing$ and all finite limit points of $A$, if any, lie on $\r$, and for some parameter $\rho>c_S$. Finally, let
\begin{equation*}
    \G:=\left(\bigcup_{c>0} \A\P^*(\cp^++ic)\right)\cap\left(\bigcup_{k,m\geq0} \left(\NN_{\leq k}-\NN_{\leq k}\right)+\MM_{\leq m}\right),
\end{equation*}
where the addition of sets if defined by $U+V:=\{u+v;u\in U,v\in V\}$.
\begin{corollary}
\label{cor:bijection}
    The map $\F\S\ra\G$, $(\mu,a)\mt F$ given by Theorem \ref{thm:main} is a bijection. Its inverse is the correspondence $F\mt (\mu,a)$ constructed in Theorem \ref{thm:converse_main}.
\end{corollary}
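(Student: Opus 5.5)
The plan is to verify four things in turn: the map of Theorem \ref{thm:main} lands in $\G$; the map of Theorem \ref{thm:converse_main} lands in $\F\S$; and each composite is the identity. Well-definedness on $\F\S$ is immediate from Theorem \ref{thm:main}. Given $(\mu,a)\in\F\S$ with $\ord(\mu)\leq 2(k+1)$, the function $F$ is holomorphic almost periodic on $\Im z>c$, so $F(\cdot+ic)\in\A\P(\cp^+)$. By uniform convergence of the Dirichlet series we have $\e F(\la)=a(\la)$ for $\la>0$ and $\e F(0)=a(0)/2$, which is real because $a$ is antipodal. Hence $\e F$ inherits finite exponential growth from $a$. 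Finally, \eqref{eq:F_formula} exhibits $F=N+S$ with $N\in\NN_{\leq k}-\NN_{\leq k}$ and $S\in\MM_{\leq k}$, using that $\eta$ is real, $\r$-symmetric and of order $\leq 2(k+1)$. So $F\in\G$.

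In the other direction, take $F\in\G$ and write $F=N+S$ with $S\in\MM_{\leq m}$ given by some set $A$. The poles of $F$ in $\cp^+$ are exactly the points of $A\cap\cp^+$. Each is a simple pole with residue $-b(\ga)/2\pi i$, because $N$ is holomorphic and the sum defining $S$ converges locally uniformly away from $A$ (the limit points of $A$ lie on $\r$). The hypotheses of Theorem \ref{thm:converse_main} then hold with $j=k$:
\begin{itemize}
\item[-] $b$ is real;
\item[-] \eqref{eq:convergence_eta_series} follows from $\ord(\eta)\leq 2(m+1)$;
\item[-] $F-S=N$ lies in the required Nevanlinna difference class.
\end{itemize}
Theorem \ref{thm:converse_main} produces $(\nu+\eta,a)$, a real-antipodal $\r$-symmetric FS-pair with $a$ of finite exponential growth, so the pair lies in $\F\S$.

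For the composite $\F\S\to\G\to\F\S$, start from $(\mu,a)$ and let $F$ come from Theorem \ref{thm:main}. First, applying \eqref{eq:def_function_a} to $\e F$ returns $a$:
\begin{itemize}
\item[-] for $\la>0$ it gives $a(\la)$;
\item[-] for $\la=0$ it gives $2\Re(a(0)/2)=a(0)$, since $a(0)$ is real by antipodality;
\item[-] for $\la<0$ it gives $\ov{a(-\la)}=a(\la)$.
\end{itemize}
Second, the poles and residues of $F$ recover $\eta$ exactly, because in \eqref{eq:F_formula} the first two terms are holomorphic in $\cp^+$. Only the atoms in $\cp^+$ are read off this way, but $\r$-symmetry determines those in $\cp^-$. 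Third, $F-S$ equals the Nevanlinna integral plus $iQ_\rho$, once the same $\rho$ and $m=k$ are used for $S$. By uniqueness of the Herglotz--Nevanlinna measure, its representing measure is $\nu$. The remark after Theorem \ref{thm:converse_main} guarantees that the choice of $\rho$ and $m$ does not affect the output.

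For the composite $\G\to\F\S\to\G$, start from $F$, form $(\mu,a)$, and let $\tilde F:=a(0)/2+\sum_{\la>0}a(\la)e^{2\pi i\la z}$. This equals $\e F(0)+\sum_{\la>0}\e F(\la)e^{2\pi i\la z}$, using $\e F(0)\in\r$. It remains to show that $F$ has no negative spectrum and equals its Fourier series, which is the step I expect to be the main obstacle. I would first try to avoid proving this directly. By Theorem \ref{thm:main} applied to $(\mu,a)$, $\tilde F$ has the form \eqref{eq:F_formula} with the same $\nu$ and $\eta$. Hence $H:=F-\tilde F$ is holomorphic on $\cp^+$, since the poles and residues cancel. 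Since $\nu$ is the common representing measure, $H$ lies in $\NN-\NN$ with zero representing measure, so its boundary measure vanishes and $H$ equals $iP$ for a polynomial $P$, real up to the Nevanlinna normalization. But $H$ is almost periodic on $\Im z>c'$, so it is bounded there, and therefore $P$ is constant. Its constant value is the mean, $\e H(0)=\e F(0)-a(0)/2=\e F(0)-\Re\e F(0)=0$. Hence $F=\tilde F$. The delicate points are the sign and normalization of the Nevanlinna representation, and checking that uniqueness of the measure forces the polynomial remainder; I would settle both from the Nevanlinna preliminaries in Section \ref{section:Nevanlinna_class}.
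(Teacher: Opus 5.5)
Your proposal is correct. For injectivity (your composite $\F\S\to\G\to\F\S$) it matches the paper's argument: $a$ comes from the Fourier coefficients, $\eta$ from the poles and residues, and $\nu$ from uniqueness of the Herglotz--Nevanlinna measure.

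For surjectivity you take a genuinely different route.

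\textbf{The paper's route.} Starting from $G\in\G$, the paper reuses the Phragm\'en--Lindel\"of argument from the proof of Theorem \ref{thm:converse_main} to show that $G$ is bounded on a half-plane. Lemma \ref{lemma:bounded_ap_function} then gives $\spec(G)\subset[0,\infty)$. Bochner's approximation together with dominated convergence, using the finite exponential growth of $\e G$, then yields $G(z)=\e G(0)+\sum_{\la>0}\e G(\la)e^{2\pi i\la z}$ on a half-plane. This is literally the function that Theorem \ref{thm:main} assigns to the pair produced by Theorem \ref{thm:converse_main}.

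\textbf{Your route.} You never expand $G$ in a Dirichlet series. Instead you apply Theorem \ref{thm:main} to that pair and compare the two meromorphic representations. Since they share $\nu$ and $\eta$, the difference $F-\tilde F$ is $i$ times a polynomial. That polynomial is constant because it is bounded on a horizontal line. It is zero because its mean is $\e F(0)-\Re\e F(0)=0$.

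\textbf{What each buys.} Your version uses both theorems purely as black boxes and avoids rerunning the Phragm\'en--Lindel\"of and Bochner arguments. Its only cost is routine bookkeeping when passing between the exponents $m,j,k$ and the parameters $\rho$ and $1$. The paper's version produces the Dirichlet expansion and the vanishing of the negative spectrum explicitly, and with them the identification with Theorem \ref{thm:main}'s function is immediate.

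\textbf{The step you flagged as delicate.} It is easier than you expect. No abstract ``zero representing measure'' argument is needed: the identity \eqref{eq:identity_change_parameter}, applied to both the integral and the sum, shows directly that the two representations differ by $i$ times a polynomial. Sign and reality normalizations are irrelevant, since any polynomial bounded on one horizontal line is constant. Note that almost periodicity gives boundedness only on strips, not on the whole half-plane, but a single line is all your argument uses.
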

Now, suppose that the measure $\nu=\sum_{n\geq1} p_n\del_{q_n}$, $p_n,q_n\in\r$ is a countable sum of delta masses and that $\supp{\nu}\cup\supp{\eta}$ has no finite limit points. Then Theorems \ref{thm:main} and \ref{thm:converse_main} assume the form:
\begin{corollary}
    \label{corollary_discrete_measure}
    Let $(\mu=\nu+\eta,a)$ be a real-antipodal $\r-$symmetric FS-pair on a strip $|\Im{z}|<\sigma_\mu$, with $\nu$ as above, $\ord(\nu),\ord(\eta)\leq 2(k+1)$ for some $k\in\z_{\geq0}$, and $a(\cd)$ of finite exponential growth. Then the series
    \begin{equation}
    \label{eq:corollary_Fourier_series}
        F(z)=\frac{a(0)}{2}+\sum_{\la>0} a(\la)e^{2\pi i\la z},
    \end{equation}
    converges absolutely in some half-plane $\Im{z}>c$, thus defining a holomorphic almost periodic function there. It extends to $\cp$ as a meromorphic map with simple poles at $\ga\in A$ and $q_n\in\r$ with respective residues $-b(\ga)/2\pi i$ and $-p_n/2\pi i$. The map $F(z)$ satisfies the relation $\ov{F(\ov{z})}=-F(z)$ and it is given explicitly by
    \begin{equation}
    \label{eq:corollary_extension_formula}
        F(z)=\frac{(z^2+1)^k}{2\pi i}\sum_{n\geq1} \frac{1+q_nz}{q_n-z}\frac{p_n}{(1+q_n^2)^{k+1}}+
        \frac{(z^2+\rho^2)^k}{2\pi i}\sum_{\ga\in A} \frac{\rho^2+\ga z}{\ga-z}\frac{b(\ga)}{(\rho^2+\ga^2)^{k+1}}+iQ_\rho(z),
    \end{equation}
    for $\rho>\sigma_\mu$ and for some real polynomial $Q_\rho(z)$ of degree at most $2k$.
    
Conversely, let $F:\cp\rightarrow\cp$ be a meromorphic map given by \eqref{eq:corollary_extension_formula}, with $p_n,q_n,b(\ga)\in\r$, $A\subset\{z;|\Im{z}|<\sigma<\infty\}$, $A=\{\ov{z};z\in A\}$, $b(\ga)=b(\ov{\ga})$ satisfying $\sum_{n\geq1} \frac{|p_n|}{(1+q_n^2)^{k+1}}<\infty$ and $\sum_{\ga\in A} \frac{|b(\ga)|}{(1+|\ga|^2)^{k+1}}<\infty$. Moreover, assume that $F(\cd+ic)\in\A\P(\cp^+)$, for some $c>0$, and $\la\mt\e F(\la)$ is locally summable. Then $(\nu+\eta,a)$ is a real-antipodal $\r-$symmetric FS-pair on the strip $|\Im{z}|<\sigma$, where
 \begin{align*}
     \nu:=\sum_{n\geq1} p_n\del_{q_n},\quad \eta:=\sum_{\ga\in A} b(\ga)\del_\ga,\quad a(\la)=\ov{a(-\la)}:=\e F(\la),\quad \text{ and } a(0)=2\Re{\e F(0)}
 \end{align*}
for $\la>0$, and $\ord(\nu),\ord(\eta)\leq 2(k+1)$.
\end{corollary}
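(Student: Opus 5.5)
The statement to be proved is Corollary \ref{corollary_discrete_measure}. The plan is to specialize Theorems \ref{thm:main} and \ref{thm:converse_main} to a discrete $\nu=\sum p_n\del_{q_n}$. The work then lies in three points: rewriting the Nevanlinna integral as a series, extending $F$ meromorphically across $\r$ (in fact to all of $\cp$), and checking the symmetry $\ov{F(\ov z)}=-F(z)$.

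\textbf{Direct part.} Apply Theorem \ref{thm:main} with the given $k$; this is allowed since $\ord(\mu)\le 2(k+1)$. It gives convergence of the Fourier series and almost periodicity on $\Im z>c$. Absolute convergence follows from the finite exponential growth of $a(\cdot)$ once $c$ exceeds the growth constant $\al/2\pi$. The integral in \eqref{eq:F_formula} against $\nu$ becomes the first series in \eqref{eq:corollary_extension_formula}, and the $\eta$-part is literally the same.

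Next we extend to $\cp$. We will show that both series define meromorphic functions on all of $\cp$. Take a compact set $K$ avoiding $\{q_n\}\cup A$. Since $\supp\nu\cup\supp\eta$ has no finite limit points, only finitely many terms have poles near $K$. For the remaining terms,
\[\Bigl|\frac{1+q_nz}{q_n-z}\Bigr|\lesssim_K 1\quad\text{and}\quad\Bigl|\frac{\rho^2+\ga z}{\ga-z}\Bigr|\lesssim_K 1,\]
so the convergence conditions $\sum|p_n|(1+q_n^2)^{-k-1}<\infty$ and $\sum|b(\ga)|(\rho^2+|\ga|^2)^{-k-1}<\infty$ (equivalent forms of the order bounds) give locally uniform convergence. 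By the identity theorem, this meromorphic function agrees with $F$ on $\cp^+$. The residue at $q_n$ is computed directly: $\frac{(q_n^2+1)^k}{2\pi i}\cdot(-(1+q_n^2))\frac{p_n}{(1+q_n^2)^{k+1}}=-p_n/2\pi i$. The residue at $\ga$ is $-b(\ga)/2\pi i$ in the same way. For the symmetry, note that $p_n,q_n,b(\ga)$ are real, $A$ is closed under conjugation with $b(\ov\ga)=b(\ga)$, and $Q_\rho$ has real coefficients. Conjugating each term and replacing $z$ by $\ov z$ therefore yields $\ov{F(\ov z)}=-F(z)$, because of the factor $1/(2\pi i)$ and because $\ov{iQ_\rho(\ov z)}=-iQ_\rho(z)$.

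\textbf{Converse part.} We are given $F$ of the form \eqref{eq:corollary_extension_formula}. Set $S$ equal to the $\eta$-series; this is exactly \eqref{eq:S_representation} with $m=k$. Then
\[F-S=\frac{(z^2+1)^k}{2\pi i}\int_\r\frac{1+tz}{t-z}\frac{\d\nu(t)}{(1+t^2)^{k+1}}+iQ_\rho,\]
with $\nu=\sum p_n\del_{q_n}$ real and $\int(1+t^2)^{-k-1}\d|\nu|<\infty$. We will invoke the result of Section \ref{section:Nevanlinna_class}, presumably that such a representation lies in $\NN_{\le k}-\NN_{\le k}$. The route is to write $\nu=\nu^+-\nu^-$; each part gives an element of $\NN_{\le k}$, and the polynomial term is absorbed into one of them. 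By uniqueness of the Herglotz--Nevanlinna representation, the measure attached to $F-S$ is this $\nu$. The hypotheses of Theorem \ref{thm:converse_main} then hold: the poles in $\cp^+$ are those $\ga\in A$ with $\Im\ga>0$, the residues are real multiples as required, and \eqref{eq:convergence_eta_series} holds with $m=k$. That theorem gives the FS-pair together with the stated $a(\cdot)$. The order bounds follow from the summability assumptions and from the Remark following Theorem \ref{thm:converse_main}, which gives $\ord(\nu)\le 2(k+1)$.

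\textbf{Main obstacle.} The delicate point is the strip. Theorem \ref{thm:converse_main} gives the strip $|\Im z|\le c$ with $\rho>c$, but we want $|\Im z|<\sigma$. Since the identity \eqref{eq:FSpair_def} does not depend on any strip parameter, this should only be a matter of support: $\eta$ lives in $|\Im z|<\sigma$, so the FS-pair is valid there. Care is needed to choose $\rho>\max(c,\sigma)$, which is admissible because $Q_\rho$ absorbs the change of $\rho$.
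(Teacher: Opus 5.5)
Your proposal is correct and follows the paper's approach. The paper gives no separate proof; it simply presents this corollary as the specialization of Theorems \ref{thm:main} and \ref{thm:converse_main} to $\nu=\sum_n p_n\del_{q_n}$. The steps you add are the routine verifications the paper leaves implicit: locally uniform convergence of both series off the discrete set $\{q_n\}\cup A$ (giving the meromorphic extension to $\cp$), the residue and conjugation-symmetry checks, the splitting $\nu=\nu^+-\nu^-$ to get $F-S\in\NN_{\le k}-\NN_{\le k}$, and re-choosing $\rho>\max(c,\sigma)$ while stating the pair on $|\Im z|<\sigma$, which is legitimate because the summation identity does not depend on the strip.
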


Finally, we present an application of our results to the Guinand-Weil explicit formula for the Selberg class. Let $L\in\S$ given by
\begin{align*}
    L(s)=\sum_{n\geq1} \frac{a_L(n)}{n^s}, \quad \Re{s}>1.
\end{align*}
Then the explicit-formula for $L$ takes the following form: for any $\p\in C^\infty_c(\r)$
\begin{align*}
     &\frac{1}{2\pi}\sum_{n\geq2}\frac{1}{\sqrt n}
    \left\{
    \Lambda_L(n)\p\left(\frac{\log n}{2\pi}\right)
    +\ov{\Lambda_L(n)}\p\left(-\frac{\log n}{2\pi}\right)
    \right\}\\
    &=\frac{1}{2\pi}\int_{\r}\ft{\p}(t)W_L(t)\d t+m_L\left\{\ft{\p}\left(\frac{i}{2}\right)
    +\ft{\p}\left(-\frac{i}{2}\right)\right\}
    -\sum_{\rho}\ft{\p}(\ga_\rho),
\end{align*}
for some $Q_L>0,\la_j>0, \Re{\mu_j}\geq0$ depending on $L$, $\ga_\rho=(1/2-\rho)/i$, where $\rho$ runs over the non-trivial zeros of $L$ (counting multiplicities),
\begin{align*}
    W_L(t):=2\log Q_L+2\Re\sum_{j=1}^r
    \lambda_j\frac{\Gamma'}{\Gamma}
    \left(\lambda_j\left(\frac12-it\right)+\mu_j\right)\quad \text{and} \quad -\frac{L'}{L}(s)=\sum_{n\geq2} \frac{\Lambda_L(n)}{n^s}.
\end{align*}
We now set

\begin{align*}
    a\left(\frac{\log n}{2\pi}\right)
    &=\frac{\Lambda_L(n)}{2\pi\sqrt n},
    &
    a\left(-\frac{\log n}{2\pi}\right)
    &=\frac{\ov{\Lambda_L(n)}}{2\pi\sqrt n},\\
    \d\nu(t)
    &=\frac{W_L(t)}{2\pi}\d t
      -\sum_{\substack{\rho\\ \Re\rho=1/2}}\del_{\ga_\rho},
    &
    \eta
    &=m_L\left(\del_{i/2}+\del_{-i/2}\right)
      -\sum_{\substack{\rho\\ \Re\rho\neq1/2}}\del_{\ga_\rho},
\end{align*}
for $n\geq2$, and $a(x)=0$ otherwise, and $m_L\in\z_{\geq0}$ is the order of the possible pole of $L$ at $s=1$. The symmetry and distribution of zeros of $L$ imply that $(\nu+\eta,a)$ is an $\r-$symmetric real-antipodal FS-pair on the strip $|\Im{z}|\leq1/2$ with $\ord(\nu),\ord(\eta)\leq 2$ and $a(\cd)$ of finite exponential growth. Applying Theorem \ref{thm:main} with $k=0$ and auxiliary parameter equal to $1$, we obtain
\begin{corollary}
    \label{cor:Selberg_example}
    The generating function given by Theorem \ref{thm:main} of the FS-pair $(\nu+\eta,a)$ defined above by the explicit formula for $L$ is
    
    \begin{equation}
    F_L(z)=\frac{1}{2\pi}\sum_{n\geq2}
    \frac{\Lambda_L(n)}{n^{1/2-iz}}
    =-\frac{1}{2\pi}\frac{L'}{L}\left(\frac12-iz\right),
    \qquad \Im z>\frac12,
\end{equation}
which extends to $\cp^+$ as a meromorphic function through the expression

\begin{align}
\label{eq:Selberg_log_derivative_extension}
    F_L(z)=&\frac{1}{4\pi^2 i}\int_{\r}
    \frac{1+tz}{t-z}W_L(t)\frac{\d t}{1+t^2}
    -\frac{4m_Lz}{i\pi(1+4z^2)}\\
    &-\frac{1}{2\pi}\sum_{\rho}
    \frac{1+i(\rho-1/2)z}
    {\bigl((\rho-1/2)^2-1\bigr)\bigl((\rho-1/2)+iz\bigr)}
    +iC_L,
\end{align}
where $C_L=-\frac{1}{2\pi}\Im\frac{L'}{L}\left(\frac32\right)\in\r$.
\end{corollary}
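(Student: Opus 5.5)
The proof will be a direct application of Theorem \ref{thm:main} with $k=0$ and $\rho=1$, followed by an explicit evaluation of each term in \eqref{eq:F_formula}. Before applying it, I will check its hypotheses for the pair $(\nu+\eta,a)$ defined above.

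\emph{Hypotheses.} The pair is $\r$-symmetric and real-antipodal because of the functional equation and the reflection symmetry $\rho\mapsto 1-\ov{\rho}$ of the nontrivial zeros. The measure $\eta$ has no finite limit points off $\r$, since the zeros are discrete. The order bounds $\ord(\nu),\ord(\eta)\leq 2$ follow from the zero-counting estimate $N_L(T)\ll T\log T$ and the growth $W_L(t)=O(\log(2+|t|))$ (Stirling). For the latter it is enough that $\int W_L(t)(1+t^2)^{-1}\d t$ and $\sum_\rho (1+|\ga_\rho|^2)^{-1}$ converge. Finite exponential growth of $a(\cd)$ follows from $\Lambda_L(n)\ll n^{\theta}$ with $\theta<1$. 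By Remark \ref{remark_measure_decomposition}, the zeros on the critical line give real atoms $\ga_\rho$ and are placed in $\nu$; this does not change the generating function.

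\emph{Identifying $F$.} Since $a(0)=0$ and $\la=\log n/2\pi$ gives $e^{2\pi i\la z}=n^{iz}$, the series defining $F$ in Theorem \ref{thm:main} is
\[
\sum_{n\geq2}\frac{\Lambda_L(n)}{2\pi\sqrt n}\,n^{iz}=\frac{1}{2\pi}\sum_{n\geq2}\frac{\Lambda_L(n)}{n^{1/2-iz}}.
\]
It converges absolutely when $\Re(1/2-iz)>1$, that is, for $\Im z>1/2$. There it equals $-\frac{1}{2\pi}\frac{L'}{L}(\tfrac12-iz)$ by the Dirichlet series for $-L'/L$.

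\emph{Evaluating \eqref{eq:F_formula}.} With $k=0$ and $\rho=1$, the formula is the sum of three pieces plus $iQ_1$, where $Q_1$ is a real constant.
\begin{itemize}
\item[(i)] The continuous part of $\nu$ gives $\frac{1}{4\pi^2 i}\int_{\r}\frac{1+tz}{t-z}W_L(t)\frac{\d t}{1+t^2}$.
\item[(ii)] The atoms $m_L(\del_{i/2}+\del_{-i/2})$ have $1+\ga^2=3/4$. Combining the two fractions over the common denominator $(i/2-z)(-i/2-z)=z^2+1/4$ should give $-\frac{4m_Lz}{i\pi(1+4z^2)}$. This is a routine algebra check.
\item[(iii)] All remaining atoms, real and non-real, have weight $-1$ at $\ga_\rho=i(\rho-\tfrac12)$. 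Write $w=\rho-1/2$. Then $1+\ga_\rho z=1+iwz$, $\ga_\rho-z=i(w+iz)$ and $1+\ga_\rho^2=-(w^2-1)$, so each term becomes $-\frac{1}{2\pi}\frac{1+iwz}{(w^2-1)(w+iz)}$, which is the stated summand. Absolute convergence comes from $\ord\leq2$. The real zeros taken from $\nu$ and the non-real ones from $\eta$ give the same expression, so the two sums merge into a single sum over all $\rho$.
\end{itemize}

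\emph{Determining the constant.} I will evaluate at $z=i$. For every $\ga\in\cp$ with $\ga\neq i$ we have $1+i\ga=i(\ga-i)$, so $\frac{1+\ga z}{\ga-z}\big|_{z=i}=i$. Each term in (i)–(iii) then equals $\frac{1}{2\pi i}\cdot i$ times a real mass, and is therefore real. Hence $iQ_1=i\,\Im F_L(i)$. Since $\tfrac12-i\cdot i=\tfrac32$, this gives $C_L=-\frac{1}{2\pi}\Im\frac{L'}{L}(\tfrac32)$.

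The main obstacle is the bookkeeping in the hypothesis check rather than the algebra. One must confirm the real-antipodal and $\r$-symmetric structure for non-self-dual $L$, where $\ov{\Lambda_L(n)}$ enters. One must also justify the order bounds needed for absolute convergence of the zero sum with the $(1+\ga^2)^{-1}$ weights. For these I would take the standard Selberg-class zero-density and Stirling estimates as the input.
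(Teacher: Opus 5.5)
Your proposal follows the paper's own route. You apply Theorem \ref{thm:main} with $k=0$ and auxiliary parameter $1$, identify $F_L$ through the Dirichlet series of $-L'/L$ on $\Im z>1/2$, evaluate the three pieces, and fix the constant by evaluating at $z=i$. Your algebra for the $m_L$ atoms and for the zero sum is correct, and merging the on-line zeros (in $\nu$) with the off-line ones (in $\eta$) into one sum is exactly right.

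One correction is needed in the last step. At $z=i$ the atom $\ga$ contributes $\frac{b(\ga)}{2\pi(1+\ga^2)}$. For an off-line zero $\rho=\beta+i\tau$ with $\tau\neq0$, the number $1+\ga_\rho^2$ has imaginary part $-2\tau(\beta-\tfrac12)\neq0$, so that individual term is not real. The sum is nevertheless real. Pair $\ga_\rho$ with $\ov{\ga_\rho}=\ga_{1-\ov{\rho}}$, which carries the same real weight by $\r$-symmetry; each pair then contributes $\frac{b}{\pi}\Re\frac{1}{1+\ga^2}$, and absolute convergence justifies the regrouping.
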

The expression \eqref{eq:Selberg_log_derivative_extension} gives an explicit meromorphic continuation of $F_L$ in terms of a Herglotz-Nevanlinna representation. Although meromorphic extension formulas for the logarithmic derivative $L'/L$ were already known in the literature, to the best of our knowledge this Herglotz-Nevanlinna normalization has not been previously recorded.

The paper is organized as follows: In Section \ref{section:applications} we recall the definition and some properties of the Selberg class and prove Corollary \ref{cor:Selberg_example}. In Section \ref{sec:aux_results} we collect the essential facts about almost periodic and Nevanlinna functions, and we also prove a Bridge Lemma for FS-pairs on a strip. Theorems \ref{thm:main} and \ref{thm:converse_main} and Corollary \ref{cor:bijection} are proved in Section \ref{section:proof_main_results}.

\section{Explicit formula for the Selberg class}\label{section:applications}

Following \cite{KP}, we recall the definition of the Selberg class $\S$. Every $L\in\S$ satisfies the following axioms: $L$ is a Dirichlet series
\begin{equation*}
    L(s)=\sum_{n\geq1} \frac{a_L(n)}{n^s}
\end{equation*}
that converges absolutely for $\Re{s}>1$ and $a_L(n)\ll n^\ep$ for any $\ep>0$\footnote{We recall that $f(x)\ll g(x)$ for $x\ra\infty$ means that, for some finite constant $C$, $|f(x)|\leq C|g(x)|$ holds for all sufficiently large $x$.}. There is an integer $m_L\geq0$ such that $(s-1)^{m_L}L(s)$ is entire of finite order; we take $m_L$ to be the order of the possible pole at $s=1$. $L$ satisfies a functional equation of the form $\Phi_L(s)=\omega_L\ov{\Phi_L(1-\ov{s})}$, where
\begin{equation*}
\Phi_L(s):=Q_L^s\prod_{j=1}^r\Gamma(\lambda_js+\mu_j)L(s),
\end{equation*}
with $Q_L>0$, $\lambda_j>0$, $\Re\mu_j\geq0$, and $\omega_L\in\cp$, $|\omega_L|=1$. Finally,
\begin{equation*}
    \log L(s)=\sum_{n\geq2}\frac{b_L(n)}{n^s},
\end{equation*}
where $b_L(n)=0$ unless $n=p^m$ for some prime $p$ and $m\geq1$, and $b_L(n)\ll n^{\theta}$ for some $\theta<1/2$.

We define the generalized von Mangoldt function by $\Lambda_L(n):=b_L(n)\log n$, so that
\begin{equation}
\label{eq:log_derivative_Selberg}
    -\frac{L'}{L}(s)=\sum_{n\geq2}\frac{\Lambda_L(n)}{n^s}
\end{equation}
holds with absolute convergence on $\Re{s}>1$.

The completed function
\begin{equation*}
    \xi_L(s):=s^{m_L}(1-s)^{m_L}\Phi_L(s)
\end{equation*}
is entire of order 1. The zeros $\rho$ of $\xi_L$ are located in the closed critical strip $0\leq\Re{s}\leq1$ and are called non-trivial zeros of $L$. The functional equation pairs $\rho$ with $1-\ov{\rho}$. In contrast to the Riemann zeta function $\zeta(s)$, an element $L\in\S$ may have a zero at $s=1$. In this case, $m_L=0$ and $\rho=1$ is a non-trivial zero, and it is paired with $\rho=0$.

Set
\begin{equation*}
    W_L(t):=2\log Q_L+2\Re\sum_{j=1}^r
    \lambda_j\frac{\Gamma'}{\Gamma}
    \left(\lambda_j\left(\frac12-it\right)+\mu_j\right).
\end{equation*}
Then the Guinand-Weil explicit formula for $L$ takes the form
\begin{align}
\label{eq:Selberg_explicit_formula}
    &\frac{1}{2\pi}\sum_{n\geq2}\frac{1}{\sqrt n}
    \left\{
    \Lambda_L(n)\p\left(\frac{\log n}{2\pi}\right)
    +\ov{\Lambda_L(n)}\p\left(-\frac{\log n}{2\pi}\right)
    \right\}\\
    &=\frac{1}{2\pi}\int_{\r}\ft{\p}(t)W_L(t)\d t+m_L\left\{\ft{\p}\left(\frac{i}{2}\right)
    +\ft{\p}\left(-\frac{i}{2}\right)\right\}
    -\sum_{\rho}\ft{\p}(\ga_\rho),
\end{align}
for every $\p\in C_c^\infty(\r)$, where $\ga_\rho:=\frac{1/2-\rho}{i}$ and $\rho$ runs over the non-trivial zeros of $L$, counted with multiplicity. See \cite[Lemma~9, Eq.~(6.1)]{PS}\footnote{Although \cite[Lemma~9, Eq.~(6.1)]{PS} is stated under the assumption that $s=1$ is not a zero of $L$, the same residue argument applies without this assumption when the endpoint zeros are included in the zero sum.}.

We now define
\begin{align*}
    a\left(\frac{\log n}{2\pi}\right)
    &=\frac{\Lambda_L(n)}{2\pi\sqrt n},
    &
    a\left(-\frac{\log n}{2\pi}\right)
    &=\frac{\ov{\Lambda_L(n)}}{2\pi\sqrt n},\\
    \d\nu(t)
    &=\frac{W_L(t)}{2\pi}\d t
      -\sum_{\substack{\rho\\ \Re\rho=1/2}}\del_{\ga_\rho},
    &
    \eta
    &=m_L\left(\del_{i/2}+\del_{-i/2}\right)
      -\sum_{\substack{\rho\\ \Re\rho\neq1/2}}\del_{\ga_\rho},
\end{align*}
for $n\geq2$, and $a(x)=0$ otherwise, and $\d t$ denotes the Lebesgue measure. It follows that $a(\cd)$ is antipodal and of finite exponential growth. Moreover, $\ord(\nu),\ord(\eta)\leq2$, which follows from Stirling's formula and the fact that $\xi_L$ has order 1. Finally, the symmetry of the zeros $\rho$ implies that $\eta$ is $\r$-symmetric. Therefore, $(\nu+\eta,a)$ defines an FS-pair on the strip $|\Im{z}|\leq1/2$, which is real-antipodal and $\r$-symmetric.

In accordance with the convention in Remark \ref{remark_measure_decomposition}, the zeros for which $\ga_\rho\in\r$ contribute point masses to $\nu$, while the remaining zeros contribute point masses to $\eta$. This choice concerns only the decomposition $\mu=\nu+\eta$ and does not affect the total measure $\mu$ or the associated generating function $F_L$.

Applying Theorem \ref{thm:main} with $k=0$ and auxiliary parameter equal to 1, we obtain the generating function
\begin{equation}
\label{eq:Selberg_generating_function}
    F_L(z)=\frac{1}{2\pi}\sum_{n\geq2}
    \frac{\Lambda_L(n)}{n^{1/2-iz}}
    =-\frac{1}{2\pi}\frac{L'}{L}\left(\frac12-iz\right),
    \qquad \Im z>\frac12,
\end{equation}
and its meromorphic extension to $\cp^+$ is given by
\begin{align}
\label{eq:Selberg_meromorphic_continuation}
    F_L(z)={}&\frac{1}{4\pi^2 i}\int_{\r}
    \frac{1+tz}{t-z}W_L(t)\frac{\d t}{1+t^2}
    -\frac{4m_Lz}{i\pi(1+4z^2)}\\
    &-\frac{1}{2\pi}\sum_{\rho}
    \frac{1+i(\rho-1/2)z}
    {\bigl((\rho-1/2)^2-1\bigr)\bigl((\rho-1/2)+iz\bigr)}
    +iC_L,
\end{align}
where $C_L=-\frac{1}{2\pi}\Im\frac{L'}{L}\left(\frac32\right)\in\r$, which follows by computing $F_L(i)$ and comparing the real and imaginary parts.

Observe that if $L$ is self-dual, that is, if $L(s)=\ov{L(\ov{s})}$, or equivalently if its Dirichlet coefficients are real, then $(L'/L)(3/2)\in\r$ and hence $C_L=0$. Self-duality is sufficient, but not necessary, for the vanishing of $C_L$.

In the particular case where $L=\zeta$ is the Riemann zeta function, then the Guinand-Weil explicit formula assumes the simpler version
\begin{align*}
    \frac{1}{2\pi}\sum_{n\geq1} \frac{\Lambda(n)}{\sqrt{n}}\left(\p\left(\frac{\log{n}}{2\pi}\right)+\p\left(-\frac{\log{n}}{2\pi}\right)\right)=&\frac{1}{2\pi}\int_{\r} \ft{\p}(t)\left\{\Re{\frac{\Gamma'}{\Gamma}\left(\frac{1}{4}+i\frac{t}{2}\right)}-\log{\pi}\right\}\d t\\
    &+\ft{\p}\left(\frac{i}{2}\right)+\ft{\p}\left(-\frac{i}{2}\right)-\sum_{\rho} \ft{\p}\left(\ga_\rho\right),
\end{align*}
which holds for any test function $\p\in C^\infty_c(\r)$, and $\Lambda(n)$ is the Von-Mangoldt function defined by
\begin{equation}
    \Lambda(n)=\begin{cases}
        \log{p}, & \text{ if } n=p^k,\, k\geq1\\
        0, & \text{ otherwise}.
    \end{cases}
\end{equation}
The corresponding FS-pair $(\nu+\eta,a)$ on the strip $|\Im{z}|\leq1/2$ is given by
\begin{align*}
    &a\left(\pm\frac{\log{n}}{2\pi}\right)=\frac{1}{2\pi}\frac{\Lambda(n)}{\sqrt{n}},\, \text{ for }\, n\geq1\, \text{ and } 0 \text{ otherwise,}\\
    &\d\nu(t)=\frac{1}{2\pi}\left\{\Re{\frac{\Gamma'}{\Gamma}\left(\frac{1}{4}+i\frac{t}{2}\right)}-\log{\pi}\right\}\d t-\sum_{\substack{\rho\\ \Re\rho=1/2}}\del_{\ga_\rho},\\
    &\eta=\del_{i/2}+\del_{-i/2}-\sum_{\substack{\rho\\ \Re\rho\neq1/2}}\del_{\ga_\rho},
\end{align*}
and the corresponding generating function given by Theorem \ref{thm:main} takes the form
\begin{align*}
    F(z)=\frac{1}{2\pi}\sum_{n\geq1} \frac{\Lambda(n)}{n^{1/2-iz}}=-\frac{1}{2\pi}\frac{\zeta'}{\zeta}\left(\frac{1}{2}-iz\right), \text{ which holds for } \Im{z}>\frac{1}{2},
\end{align*}
and which extends as a meromorphic function to $\cp^+$ as
\begin{align*}
    F(z)=&\frac{1}{4\pi^2 i}\int_{\r} \frac{1+tz}{t-z}\left\{\Re\frac{\Gamma'}{\Gamma}\left(\frac{1}{4}+i\frac{t}{2}\right)-\log{\pi}\right\}\frac{\d t}{1+t^2}-\frac{4z}{i\pi(1+4z^2)}\\
    &+\frac{1}{2\pi}\sum_{\al} \frac{1}{(\al-1/2)^2-1}\cd\frac{1-(\al-1/2)iz}{(\al-1/2)-iz}.
\end{align*}
In this case $Q(z)\equiv0$ is the zero polynomial because $\zeta(s)$ is real for $s\in\r$.

\section{Preparatory results}
\label{sec:aux_results}
In this section, we develop some preparatory results that we will need to prove Theorems \ref{thm:main} and \ref{thm:converse_main}.
\subsection{The almost periodic class}
\label{section:almost_periodic_class}
We recall that a continuous function $f:\r\ra\cp$ is almost periodic if, for any $\ep>0$, there exists a set of translations $\tau_\ep\subset\r$, which is relatively dense, and such that $\sup_{x\in\r} |f(x+t)-f(x)|<\ep$, for any $t\in\tau_\ep$. We denote by $\A\P(\r)$ the set of almost periodic functions $f:\r\ra\cp$ and by $C(\r)$ the space of bounded continuous functions $f:\r\ra\cp$, endowed with the uniform norm $||f||_{\infty}=\sup_{x\in\r} |f(x)|$. We now present Bochner's criterion for almost periodicity:
\begin{proposition}[Bochner's criterion]
    Let $f:\r\ra\cp$ be a continuous function. Then $f$ is almost periodic if and only if the set $\{f(\cd+h);h\in\r\}$ is pre-compact in $(C(\r),||\cd||_{\infty})$.
\end{proposition}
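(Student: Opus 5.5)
The plan is to prove Bochner's criterion in its two directions separately. Both rest on one observation: for a continuous bounded $f$, the family of translates $\{f(\cd+h);h\in\r\}$ is totally bounded in $(C(\r),\|\cd\|_\infty)$ exactly when the relatively dense $\ep$-translation sets exist. First I note that almost periodicity forces boundedness and uniform continuity, so $f$ lies in $C(\r)$. Indeed, fix $\ep=1$ and let $l$ be the inclusion length. Every $x$ can be written as $x=s+t$ with $t\in\tau_1$ and $s\in[0,l]$, so $|f(x)|\leq\sup_{[0,l]}|f|+1$. Uniform continuity follows the same way: approximate $f$ on $\r$ by its values on a compact window $[0,2l]$, using $|f(x+t)-f(x)|<\ep$, and combine this with uniform continuity on that compact set.

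For the direction almost periodic $\Rightarrow$ pre-compact, it suffices to show total boundedness, since $C(\r)$ is complete. Fix $\ep>0$ and let $\tau_{\ep}$ be relatively dense with inclusion length $l$. For any $h\in\r$ choose $t\in\tau_\ep\cap(h-l,h)$ and set $s=h-t\in(0,l)$. Then
$$\|f(\cd+h)-f(\cd+s)\|_\infty=\sup_x|f(x+s+t)-f(x+s)|<\ep.$$
Now use uniform continuity to pick a finite $\delta$-net $s_1,\dots,s_N$ of $[0,l]$ such that $\|f(\cd+s)-f(\cd+s_j)\|_\infty<\ep$ whenever $|s-s_j|<\delta$. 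The translates $f(\cd+s_j)$ then form a finite $2\ep$-net of the orbit, which gives total boundedness.

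For the converse, assume the orbit is pre-compact; in particular $f$ is bounded. Given $\ep>0$, total boundedness gives $h_1,\dots,h_N$ such that every translate $f(\cd+h)$ lies within $\ep/2$ of some $f(\cd+h_j)$. Fix an arbitrary $x_0\in\r$. Apply this to $h=x_0$, so that $\|f(\cd+x_0)-f(\cd+h_j)\|_\infty<\ep/2$ for some $j$. Put $t=x_0-h_j$. Replacing the variable $y$ by $y-h_j$ in the sup gives
$$\sup_y|f(y+t)-f(y)|=\|f(\cd+x_0)-f(\cd+h_j)\|_\infty<\ep.$$
Hence $t\in\tau_\ep:=\{t;\sup_y|f(y+t)-f(y)|<\ep\}$, and $|t-x_0|\leq L:=\max_j|h_j|$. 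So every interval of length $2L+1$ meets $\tau_\ep$, which means $\tau_\ep$ is relatively dense.

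The main obstacle is the first step, namely extracting boundedness and uniform continuity from the definition, because the forward direction's finite-net argument depends on it. The rest is a direct compactness bookkeeping argument. I will handle it by writing each $x$ as $s+t$ with $s$ in a fixed compact window and $t$ an $\ep$-almost period, as indicated above.
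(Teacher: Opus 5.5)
Your proof is correct. The paper gives no argument of its own for this proposition; it only cites a textbook on almost periodic functions. What you give is the standard argument.

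\emph{Forward direction.} You extract boundedness and uniform continuity from the almost periods. You then write every shift as $h=s+t$, with $s$ in a fixed compact window $[0,l]$ and $t$ an $\varepsilon$-almost period. This reduces total boundedness of the orbit to compactness of $[0,l]$, and completeness of $C(\mathbb{R})$ upgrades total boundedness to pre-compactness.

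\emph{Converse.} A finite $\varepsilon/2$-net $f(\cdot+h_1),\dots,f(\cdot+h_N)$ of the orbit produces, for every $x_0$, the $\varepsilon$-almost period $x_0-h_j$ within distance $\max_j|h_j|$ of $x_0$.

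\emph{Comparison with the usual textbook route.} Classical texts often state Bochner's criterion sequentially: every sequence of translates has a uniformly convergent subsequence. They prove necessity by a diagonal extraction over $\varepsilon=1/n$. They prove sufficiency by contradiction, building shifts $h_n$ whose pairwise differences are never $\varepsilon$-almost periods. Working with total boundedness directly avoids the diagonal process, and in the converse it gives an explicit inclusion length $2L+1$ instead of an existence-by-contradiction statement.

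\emph{One step to state explicitly.} You use that the centers of the finite net can be chosen inside the orbit itself. This follows by first taking an $\varepsilon/4$-net and then replacing each center whose ball meets the orbit by a translate lying in that ball.
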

The proof of this Proposition can be found in \cite[Section 1.2]{AP}. It follows from Bochner's criterion that $\A\P(\r)$ is closed under addition, multiplication, and uniform convergence. In particular, since the function $x\mt e^{2\pi i\la x}$, $x,\la\in\r$, is almost periodic, we conclude that the set $\A\P(\r)$ contains all trigonometric polynomials.

If $f\in\A\P(\r)$, then for each $\la\in\r$, the limit
\begin{align*}
    \e f(\la):=\lim_{T\ra\infty} \frac{1}{2T} \int_{-T}^{T} f(x) e^{-2\pi i\la x}\d x
\end{align*}
does exist. We will call the numbers $\e f(\la)$ the Fourier coefficients of $f$. Moreover, $\spec(f):=\{\la\in\r; \e f(\la)\neq0\}$ is a countable set. The next result concerns how to approximate a function $f\in\A\P(\r)$ using trigonometric polynomials.
\begin{proposition}[Bochner's approximation]
\label{prop:Bochner_approximation}
    For any $f\in\A\P(\r)$, there exists a sequence of trigonometric polynomials $p_n$ such that $||p_n-f||_{\infty}\ra0$. More precisely, there exists a sequence of functions $d_n:\r\ra[0,1]$ such that each $d_n$ has finite support, $d_n\leq d_{n+1}$ and $\lim_{n\ra\infty} d_n(\la)=1$, if $\la\in\spec(f)$, and $\lim_{n\ra\infty} d_n(\la)=0$ otherwise, and if we define 
    \begin{align*}
        p_n(x):=\sum_{\la\in\r} d_n(\la)\e f(\la) e^{2\pi i\la x},
    \end{align*}
    then $p_n\ra f$ in the uniform norm.
\end{proposition}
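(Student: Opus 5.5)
The plan is to use the Bochner--Fejér summation method: convolve $f$ against finitely supported positive-definite kernels on finitely generated subgroups of $\r$ generated by the spectrum. First I reduce to showing that the averaged translates of $f$ are trigonometric polynomials which converge uniformly to $f$. By Bochner's criterion, the orbit $\{f(\cd+h)\}$ is precompact in $(C(\r),\|\cd\|_\infty)$. Hence $f$ is uniformly continuous and its mean values exist. In particular the Fourier coefficients $\e f(\la)$ are well defined, and Bessel's inequality $\sum_\la|\e f(\la)|^2\leq \e|f|^2(0)$ shows that $\spec(f)$ is countable. I enumerate $\spec(f)=\{\la_1,\la_2,\ldots\}$ and choose a rational basis $\beta_1,\beta_2,\ldots$ of the $\q$-span of $\spec(f)$, arranged so that each $\la_j$ is a rational combination of finitely many $\beta_i$.

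For each $n$ I define the Bochner--Fejér kernel
\begin{equation*}
K_n(x)=\prod_{i=1}^{n}\ \sum_{|\nu_i|<(n!)^2}\Big(1-\frac{|\nu_i|}{(n!)^2}\Big)e^{-2\pi i\nu_i\beta_i x/n!}.
\end{equation*}
This is a nonnegative trigonometric polynomial, being a product of Fejér kernels, and it has mean value $1$. I then set $p_n:=\e_x[f(\cd+x)K_n(x)]$. A direct computation using the orthogonality of exponentials under the mean gives
\begin{equation*}
p_n(x)=\sum_\la d_n(\la)\e f(\la)e^{2\pi i\la x}.
\end{equation*}
Here $d_n(\la)=\prod_i(1-|\nu_i|/(n!)^2)$ whenever $\la=\sum_i\nu_i\beta_i/n!$ with $|\nu_i|<(n!)^2$, and $d_n(\la)=0$ otherwise. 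So $d_n$ has finite support and values in $[0,1]$. Moreover $d_n$ is nondecreasing in $n$: a fixed $\la$ keeps its representation with coordinates scaled by $(n+1)$, and the Fejér weight $1-|\nu_i|/(n!)^2$ grows. Also $d_n(\la)\to1$ for $\la\in\spec(f)$, since eventually $\la$ is representable with $\nu_i/(n!)^2\to0$. For $\la\notin\spec(f)$ the coefficient $\e f(\la)$ vanishes, so I may redefine $d_n(\la)=0$ off the spectrum without changing $p_n$. This gives the required $d_n$.

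The main obstacle is the uniform convergence $p_n\to f$. I would first prove the statement for trigonometric polynomials, where it is immediate from $d_n\to1$ on the finite spectrum. For general $f$, I would use the positivity and mean-one property of $K_n$, which makes $f\mapsto p_n$ a contraction in the sup norm. The difficulty is that a density argument would require already knowing that trigonometric polynomials are dense, which is circular. So instead I would argue directly: $f(y)-p_n(y)=\e_x[(f(y)-f(y+x))K_n(x)]$. The kernel $K_n$ concentrates, in mean, on the set of $\ep$-almost periods of $f$. This follows from the Kronecker--Bohr approximation, because $\ep$-almost periods are characterized by $|e^{2\pi i\beta_i x}-1|$ being small for finitely many $i$. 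The argument then closes by uniform continuity together with the relatively dense set of almost periods. If this step proves too delicate, I would instead cite the standard proof in \cite[Section 1.2]{AP}, which carries out exactly this Bochner--Fejér argument.
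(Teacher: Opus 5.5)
Your construction of $K_n$, $p_n$ and $d_n$ is the standard Bochner--Fej\'er one. The paper gives no argument of its own and defers to \cite[Section 1.9]{Be}; it cites \cite[Section 1.2]{AP} only for Bochner's criterion, so that is not the right fallback reference. Your bookkeeping is correct: $K_n\ge0$ has mean $1$, and $d_n$ is well defined by $\q$-independence. It is monotone via $\nu_i\mapsto(n+1)\nu_i$, tends to $1$ on $\spec(f)$, and can be reset to $0$ off it. If the $\q$-span is finite dimensional, stop the product at its dimension.

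\textbf{The gap.} The uniform convergence $p_n\to f$, which is the whole content of the proposition, is not established. Your argument needs that for each $\ep>0$ there are $N,\delta$ such that every $x$ with $|e^{2\pi i\beta_ix}-1|<\delta$ for $i\le N$ is an $\ep$-almost period of $f$. Kronecker/Weyl equidistribution of $(\beta_1x/n!,\dots,\beta_nx/n!)$ does show that $K_n$ puts mean mass $1-o(1)$ on such Bohr sets, since each Fej\'er factor of length $(n!)^2$ has mass $O(1/(n!\delta))$ outside $\|\theta\|<\delta/n!$. But it says nothing linking these sets to $f$, and neither do uniform continuity or the relative density of almost periods. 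A priori you only have the reverse implication: $|\e f(\la)|\,|e^{2\pi i\la\tau}-1|\le\ep$ for every $\ep$-almost period $\tau$. The inclusion you want is Bohr's theorem on translation numbers and Fourier exponents, which is classically \emph{deduced from} the approximation theorem, so invoking it here is circular. It is also genuinely deep. If $\spec(f)=\varnothing$, the basis is empty and $p_n\equiv0$. Your claim would then make every $x$ an $\ep$-almost period for every $\ep$, i.e.\ $f\equiv\e f(0)=0$, which is exactly the uniqueness theorem.

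\textbf{What is missing.} You need an independent input. The most convenient is Parseval's identity $M\{|g|^2\}=\sum_\la|\e g(\la)|^2$ for almost periodic $g$, with $M$ the mean value. It can be proved, e.g., by de la Vall\'ee Poussin's or Weyl's argument applied to $\phi(x)=M_t\{f(x+t)\ov{f(t)}\}$; Bogolyubov's lemma is an alternative route. Given Parseval, your setup closes as follows:
\begin{itemize}
\item Apply Parseval to $f-p_n$, whose coefficients are $(1-d_n(\la))\e f(\la)$. Dominated convergence gives $M\{|f-p_n|^2\}=\sum_\la|1-d_n(\la)|^2|\e f(\la)|^2\to0$.
\item Since $p_n$ is a $K_n$-average of translates of $f$ with $K_n\ge0$ of mean $1$, every $\ep$-almost period of $f$ is a $2\ep$-almost period of $f-p_n$. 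Likewise, the modulus of continuity of $f-p_n$ is at most twice that of $f$.
\item Hence $|(f-p_n)(x_0)|\ge\delta$ forces $|f-p_n|\ge\delta/2$ on intervals of a fixed length, recurring with a fixed inclusion length. This gives $M\{|f-p_n|^2\}\ge c(\delta)>0$, which upgrades mean-square convergence to uniform convergence.
\end{itemize}
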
 
For a proof of this proposition, see, for instance, \cite[Section 1.9]{Be}.

A holomorphic map $F:\cp^+\ra\cp$ is almost periodic in the half-plane $\cp^+$ if it is bounded on strips, i.e., $\sup_{\al<\Im{z}<\beta} |F(z)|<\infty$ for any $0<\al<\beta<\infty$, and $F(\cd+iy)\in\A\P(\r)$ for any $y>0$. This is equivalent to the standard definition that states that $F\in\A\P(\cp^+)$ if $F$ is holomorphic and, for any $0<\al<\beta<\infty$ and $\ep>0$, there exists a set of translations $\tau_{\al,\beta,\ep}\subset\r$ that is relatively dense and such that $\sup_{\al<\Im{z}<\beta} |F(z+t)-F(z)|<\ep$, for any $t\in\tau_{\al,\beta,\ep}$. In this case, the limit 
\begin{equation}
\label{eq:def_Fourier_coef_ap_function}
\e F(\la):=\lim_{T\ra\infty} \frac{1}{2T} \int_{-T+iy}^{T+iy} F(z)e^{-2\pi i\la z}dz,
\end{equation}
for $\la\in\r$, exists and is independent of $y>0$, and $\spec(F):=\{\la\in\r; \e F(\la)\neq0\}$. For a proof of these facts, see, for instance, \cite[Section 1.3]{Be} and \cite[Section 3.2]{Be}.

We conclude this subsection with an elementary lemma.
\begin{lemma}
    \label{lemma:bounded_ap_function}
    Let $F\in\A\P(\cp^+)$ and assume it is bounded on the half-plane $\Im{z}>c$, for some $c>0$. Then $\spec(F)\subset[0,\infty)$.
\end{lemma}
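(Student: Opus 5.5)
The argument is a contour-shift estimate: boundedness lets us push the line of integration in the definition of $\e F(\la)$ upward, and for $\la<0$ the factor $e^{-2\pi i\la z}$ decays as $\Im z$ grows.

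Let $M:=\sup_{\Im z>c}|F(z)|<\infty$. Fix $\la<0$. The Fourier coefficient \eqref{eq:def_Fourier_coef_ap_function} exists and does not depend on $y>0$, so for every $y>c$
\begin{align*}
|\e F(\la)| &= \lim_{T\ra\infty}\left|\frac{1}{2T}\int_{-T}^{T} F(x+iy)e^{-2\pi i\la(x+iy)}\,\d x\right| \\
&\leq \limsup_{T\ra\infty}\frac{1}{2T}\int_{-T}^{T} |F(x+iy)|\,e^{2\pi\la y}\,\d x \\
&\leq M e^{2\pi \la y}.
\end{align*}
Here we used $|e^{-2\pi i\la(x+iy)}|=e^{2\pi\la y}$.

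The left-hand side does not depend on $y$. Letting $y\ra+\infty$, and using $\la<0$, the right-hand side tends to $0$. Hence $\e F(\la)=0$ for every $\la<0$, that is, $\spec(F)\subset[0,\infty)$.

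The only point needing care is the independence of $\e F(\la)$ from $y>0$. This is quoted in the paper from \cite[Section 3.2]{Be}. It can also be checked directly: apply Cauchy's theorem on the rectangle with vertices $\pm T+iy_1$ and $\pm T+iy_2$. The two vertical sides contribute $O(1)$, because $F$ is bounded on the strip $y_1\leq\Im z\leq y_2$. After dividing by $2T$ this contribution vanishes as $T\ra\infty$. With this fact in hand, there is no real obstacle.
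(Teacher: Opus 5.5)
Your proposal is correct and is essentially the paper's own proof. You bound the integrand on the line $\Im z=y>c$ by $Me^{2\pi\lambda y}$, deduce $|\mathbb{E}F(\lambda)|\leq Me^{2\pi\lambda y}$, and let $y\to\infty$ for $\lambda<0$; your modulus $e^{2\pi\lambda y}$ is the correct one, while the paper's intermediate line has a stray $i$ in the exponent. Your Cauchy-rectangle check that the coefficient does not depend on $y$ is valid, and it is a self-contained replacement for the citation the paper relies on.
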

\begin{proof}
    Assume $|F(z)|\leq M<\infty$ for $\Im{z}>c$. If $\la<0$, then $|F(z) e^{-2\pi i\la z}|\leq M e^{2\pi i\la y}$ for $\Im{z}>c$. Hence, taking $y>c$ in \eqref{eq:def_Fourier_coef_ap_function} we obtain $|\e F(\la)|\leq M e^{2\pi\la y}$. Then we let $y\ra\infty$ and obtain $\e F(\la)=0$.
\end{proof}

\subsection{The Nevanlinna class}
\label{section:Nevanlinna_class}
In this subsection, we collect the essential facts about the class $\NN_{\leq k}$ of generalized holomorphic Nevanlinna functions. For more detailed information, we refer the reader to  \cite{DL,DLLS,dB,KL,KW,GT14}.

We recall the Herglotz-Nevanlinna representation (\cite[Proposition 2.1]{DL} for the holomorphic scalar case) that states that any holomorphic function $F\in\NN_{\leq k}$ can be represented as
\begin{equation}
\label{eq:Nevanlinna_fact_k}
F(z)=\frac{(z^2+1)^{m}}{2\pi i}\int_{\r} \frac{1+tz}{t-z}\cd\frac{d\nu(t)}{(1+t^2)^{m+1}}+iQ(z),
\end{equation}
for some integer $m\leq k$, a nonnegative Borel measure $\nu$ on $\r$ such that $\ord(\nu)\leq 2(m+1)$ and $Q(z)=a_{2m+1}z^{2m+1}+...+a_1z+a_0$ is a real polynomial of degree at most $2m+1$ such that $a_{2m+1}\leq 0$. The measure $\nu$ is uniquely defined in terms of $F$ by
\begin{equation*}
  \lim_{s\downarrow0} \Re \int_{a+is}^{b+is} \frac{F(z)-iQ(z)}{(z^2+1)^k}\d z = \int_{a}^b \frac{\tfrac12 d\nu(t)}{(1+t^2)^{k+1}},  
\end{equation*}
whenever $a<b$ are points of continuity of $\nu$, and the polynomial $Q(z)$ depends on $F$ and on the exponent $m$. Conversely, any function defined by \eqref{eq:Nevanlinna_fact_k} with $m\leq k$, and with $\nu$ and $Q$ as above, defines a holomorphic function in the class $\NN_{\leq k}$.

Observe that if we define, for some $\rho>0$,
\begin{equation}
\label{eq:Nevanlinna_paramenter}
    F_\rho(z):=\frac{(z^2+\rho^2)^{m}}{2\pi i}\int_{\r} \frac{\rho^2+tz}{t-z}\cd\frac{d\nu(t)}{(\rho^2+t^2)^{m+1}}
\end{equation}
then $i[F_{\rho_1}(z)-F_{\rho_2}(z)]$ extends to a real polynomial of degree at most $2m$. To see that, we apply the identity
\begin{equation}
\label{eq:identity_change_parameter}
    \frac{(z^2+\rho^2)^m(\rho^2+tz)}{(t-z)(\rho^2+t^2)^{m+1}}=\frac{1}{t-z}-\frac{t+z}{\rho^2+t^2}\sum_{j=0}^{m-1} \left(\frac{z^2+\rho^2}{\rho^2+t^2}\right)^j-\frac{t(\rho^2+z^2)^m}{(\rho^2+t^2)^ {m+1}},\,\, \rho>0.
\end{equation}
Then, the common pole and residue cancel, leaving a real polynomial of degree at most $2m$. Moreover, if we define
\begin{align*}
    \tilde{F}_\rho(z):=\frac{(z^2+\rho^2)^{j}}{2\pi i}\int_{\r} \frac{\rho^2+tz}{t-z}\cd\frac{d\nu(t)}{(\rho^2+t^2)^{j+1}},
\end{align*}
for some $j\in\z_{\geq0}$ such that $\ord(\nu)\leq2(j+1)$, then the same identity \eqref{eq:identity_change_parameter} allows us to prove that $i[F_\rho(z)-\tilde{F}_\rho(z)]$ is a real polynomial of degree at most $2\max\{m,j\}$.

If $\mu=\nu+\eta$ is an admissible strongly tempered real measure supported on the strip $\{z;|\Im{z}|<\sigma\}$, with $\ord(\nu),\ord(\eta)\leq 2(m+1)$, then, provided that $\rho>\sigma$, the function \eqref{eq:Nevanlinna_paramenter} is still well defined and takes the form
\begin{equation}
\label{eq:Nevanlinna_F_rho}
    F_\rho(z)= \frac{(z^2+\rho^2)^m}{2\pi i} \int_{\r} \frac{\rho^2+tz}{t-z}\cd\frac{d\nu(t)}{(\rho^2+t^2)^{m+1}}+\frac{(z^2+\rho^2)^m}{2\pi i} \sum_{\ga\in A} \frac{\rho^2+\ga z}{\ga-z}\cd\frac{b(\ga)}{(\rho^2+\ga^2)^{m+1}}.
\end{equation}
The parameter $\rho$ may be changed independently in the integral and the sum, and such a change may add a term of the form $iQ_\rho(z)$ for some polynomial $Q_\rho$ of degree at most $2m$. If we assume, in addition, that $\eta$ is $\r-$symmetric, and since $\eta$ is also real valued, it follows that $Q_\rho$ has real coefficients. Moreover, we can also replace $m$ independently in the integral and the sum by any integers $l,j$ such that $\ord(\nu)\leq2(l+1)$ and $\ord(\eta)\leq2(j+1)$, respectively. This change may add a term $iQ(z)$ for $Q$ a polynomial of degree at most $2\max\{l,j,m\}$. As before, the $\r-$symmetry of $\eta$ guarantees that $Q$ is a real polynomial. All these facts are proved by applying the identity \eqref{eq:identity_change_parameter} with $t=\ga$. Finally, the map $F_\rho$ defined in this way need not belong to $\NN_{\leq k}$. This is why we decompose $F$ into its regular and singular parts in Theorem \ref{thm:main}.

\subsection{Auxiliary functions and the Bridge Lemma} Our main task in this section is to prove a version of the Bridge Lemma \cite[Lemma 5]{GV} for FS-pairs on a strip. For $z,w\in\cp^+$, $x\in \r$ and $k\in \z_{>0}$ we consider the following auxiliary functions
\begin{align*}
G_0(w,z,x)& :=\frac{e^{-2\pi i\w|x|}\1_{x<0}+e^{2\pi iz|x|}\1_{x\geq0}}{z-\w} \\
A_k(x)&:=\underbracket{e^{-2\pi|\cd|}*\cdots * e^{-2\pi|\cd|}(x)}_{k \text{-times}}  \\
G_k(w,z,x)&:=G_0(w,z,\cd)* A_k(x) \quad (k\geq 1).
\end{align*}
These functions satisfy the following properties:
\begin{itemize}
\item [(i)] (Fourier transform) For $k\geq0$, we have 
\begin{equation}
\ft{G_k}(w,z,t)=\frac{1}{2\pi^{k+1}i}\cd\frac{1}{(t-z)(t-\w)}\cd\frac{1}{(1+t^2)^k};
\end{equation}
where the Fourier transform is taken in the last variable.
\item[(ii)] (Anti-symmetry) For $\la\geq0$ we have $G_k(w,z,-\la)=-\ov{G_k(z,w,\la)}$;
\item[(iii)] (Explicit formula) For $z\neq i$, it holds
\begin{align*}
(z-\w)G_k(w,z,\la)&=e^{-2\pi\la}\sum_{j=0}^{k-1} \frac{j!b_{k-1,j}}{(2\pi)^{j+1}(1+i\w)^{j+1}}\sum_{l=0}^{j} \frac{(2\pi\la)^l(1+i\w)^l}{l!}\\
&-e^{-2\pi\la}\sum_{j=0}^{k-1} \frac{j!b_{k-1,j}}{(2\pi)^{j+1}(1+iz)^{j+1}}\sum_{l=0}^{j} \frac{(2\pi\la)^l(1+iz)^l}{l!}\\
&+e^{2\pi i\la z}\cd\frac{1}{\pi^k}\cd\frac{1}{(1+z^2)^k},
\end{align*}
for real coefficients $b_{k-1,j}$ defined by $A_k(x)=e^{-2\pi|x|}p_{k-1}(|x|)$, where $p_{k-1}(x):=\sum_{j=0}^{k-1} b_{k-1,j}x^j$. Such polynomials $p_k$ are encoded in the generating series
\begin{align*}
 \sum_{k\geq 0} q^k \pi^k p_{k}(|x|)  =\frac{e^{(1-\sqrt{1-q})|x|}}{\sqrt{1-q}}
\end{align*}
which converges absolutely for $|q|<1 $ and $x\in \r$.
\end{itemize}
Indeed, item (i) follows from $\ft{e^{-2\pi|\cd|}}(t)=\frac{1}{\pi(1+t^2)}$ combined with the elementary computation $\ft{G_0}(w,z,t)=1/[2\pi i(t-z)(t-\w)]$. Item (ii) follows from the explicit formula (iii), which in turn is proved as follows:
\begin{align*}
G_k(w,z,\la)&=\left(\int_{-\infty}^{-\la}+\int_{-\la}^{0}+\int_{0}^{\infty}\right)\frac{e^{-2\pi i\w |\la+t|}\1_{(-\infty,0)}(\la+t)+e^{2\pi iz|\la+t|}\1_{[0,\infty)}(\la+t)}{z-\w} A_k(t) dt\\
&=:I_1+I_2+I_3.
\end{align*}
After an elementary computation, we arrive at
\begin{align*}
&I_1=\frac{e^{-2\pi\la}}{z-\w}\sum_{j=0}^{k-1} \frac{j!b_{k-1,j}}{(2\pi)^{j+1}(1+i\w)^{j+1}}\sum_{l=0}^{j} \frac{(2\pi\la)^l(1+i\w)^l}{l!},\\
&I_2=\frac{-e^{-2\pi\la}}{z-\w} \sum_{j=0}^{k-1} \frac{j!b_{k-1,j}}{(2\pi)^{j+1}(1+iz)^{j+1}}\sum_{l=0}^{j} \frac{(2\pi\la)^l(1+iz)^l}{l!}+\frac{e^{2\pi i\la z}}{z-\w}\sum_{j=0}^{k-1} \frac{j!b_{k-1,j}}{(2\pi)^{j+1}}\cd\frac{1}{(1+iz)^{j+1}},\\
&\text{ and if } z=i, \text{ then } I_2=\frac{e^{-2\pi\la}}{i-\w}\sum_{j=0}^{k-1} \frac{b_{k-1,j}\la^{j+1}}{j+1}, \text{ and finally }\\
&I_3=\frac{e^{2\pi i\la z}}{z-\w}\sum_{j=0}^{k-1} \frac{j!b_{k-1,j}}{(2\pi)^{j+1}}\cd\frac{1}{(1-iz)^{j+1}}.
\end{align*}
For the last step, given $x\in\r$ and $k\geq1$ set $H_0(z,x):=e^{-2\pi i\ov{z}|x|}\1_{x<0}+e^{2\pi iz|x|}\1_{x\geq0}$ and $H_k(z,x):=H_0(z,\cd)* A_k(x)$. Then for $z=s\in\r$, by using that $A_k(x)=e^{-2\pi|x|}p_{k-1}(|x|)$ we obtain
\begin{align*}
&H_k(s,0)=\int_{\r} e^{2\pi isx}A_k(x)dx=\ft{A_k}(-s)=\frac{1}{\pi^k}\cd\frac{1}{(1+s^2)^k}, \text{ and}\\
&H_k(s,0)=\int_{\r} e^{2\pi isx}A_k(x)dx=\sum_{j=0}^{k-1} \frac{j!b_{k-1,j}}{(2\pi)^{j+1}}\left[\frac{1}{(1+is)^{j+1}}+\frac{1}{(1-is)^{j+1}}\right],
\end{align*}
and by analytic continuation in $s$, the last terms of the above equations coincide for $s\in\cp\backslash\{\pm i\}$. Regarding the generating series, we have
$$
\sum_{k\geq 1} q^{k-1} \ft{A_k}(\xi) = \sum_{k\geq 1} \frac{q^{k-1}}{\pi^k (1+\xi^2)^k} = \frac{1}{\pi(\sqrt{1-q/\pi}^2+\xi^2)},
$$
and by Fourier inversion
$$
\sum_{k\geq 1} q^{k-1} A_k(|y|) = \frac{e^{-2\pi\sqrt{1-q/\pi}|y| }}{\sqrt{1-q/\pi}}.
$$
Finally, we set $x=2\pi y$ and replace $q/\pi$ by $q$.

Let $\U:=\{\zeta\in\cp;|\Im{\zeta}|<1/2\}$. Observe that, for $z,w$ such that $\Im{z},\Im{w}>1$, the map $\ft{G_k}(w,z,\zeta)$ extends to $\U$ as a holomorphic map in the variable $\zeta$.  

For $k$ a non-negative integer, we consider the kernel $S_k$:
\begin{equation*}
S_k(x):=\frac{1}{v_k}\left(\frac{\sin(\pi x)}{\pi x}\right)^{2(k+1)}
\end{equation*}
and we choose $v_k>0$ so that $\ft{S_k}(0)=1$. Its Fourier transform is given by the convolution of $2(k+1)$ indicator functions:
\begin{equation*}
\ft{S_k}(t) = \frac{1}{v_k} \1_{[-1/2,1/2]}*\cdots*\1_{[-1/2,1/2]}(t).
\end{equation*}

We now fix an integer $k\geq0$ and write, for simplicity, $\S:=S_k$. Observe that $\ft{\S}$ is compactly supported with $\supp(\ft{\S})\subset[-k-1,k+1]$ and $\ft{\S}(0)=1$. For $T>0$, we define 
$\S_T(x)=T\S(Tx)$
which is an approximation of identity as $T\ra\infty$, and $\ft{\S_T}(t)=\ft{\S}(t/T)$. Note that, for $\Im{z},\Im{w}>1$, the function $\ft{G_k}(w,z,\cd)*\S_T$, originally defined on $\r$, extends as a holomorphic map of $\zeta$ to the domain $\U$. Indeed, define 
\begin{align*}
    \left(\ft{G_k}(w,z,\cd)*\S_T\right)(\zeta):=\int_{\r} \ft{G_k}(w,z,\zeta-s)\S_T(s)\d s,
\end{align*}
and apply Morera's Theorem.

The next lemma allows us to estimate the asymptotic decay of $\left(\ft{G_k}(w,z,\cd)*\S_T\right)(\zeta)$ as $|\Re{\zeta}|\ra\infty$.

\begin{lemma}
    \label{lemma:asymptotic_decay}
    The following statements hold:
    \begin{itemize}
        \item[i)] Let $f_1,f_2\in L^1(\r)$ such that $|f_1(s)|\leq\frac{C}{(1+s^2)^n}$ and $|f_2(s)|\leq\frac{C}{(1+s^2)^n}$ whenever $|s|>R_1$, for some integer $n\geq1$. Then $|f_1*f_2(s)|\leq\frac{C_1}{(1+s^2)^n}$ for all $|s|\geq2R_1$.
        \item[ii)] Consider the region $\R:=\{z\in\cp^+;|\Re(z)|<1 \text{ and } \Im(z)>1\}$. Then, for any $\zeta\in\U$, $z,w\in\R$, and $T>1$, it holds that
        \begin{equation}
        \label{eq:unif_estimate_lemma}
        \left|\ft{G_k}(w,z,\cd)* \S_T(\zeta)\right|\ll\frac{1}{(1+(\Re{\zeta})^2)^{k+1}}.
        \end{equation}
    \end{itemize}
\end{lemma}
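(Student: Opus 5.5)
For part i), I will split the convolution integral $f_1*f_2(s)=\int_{\r} f_1(s-u)f_2(u)\d u$ according to whether $|u|\leq|s|/2$ or $|u|>|s|/2$, assuming $|s|\geq 2R_1$.
\begin{itemize}
\item On the region $|u|\leq |s|/2$ we have $|s-u|\geq|s|/2\geq R_1$. The hypothesis then gives $|f_1(s-u)|\leq C(1+s^2/4)^{-n}\leq 4^nC(1+s^2)^{-n}$. Integrating against $|f_2|$ bounds this piece by $4^nC\|f_2\|_{L^1}(1+s^2)^{-n}$.
\item On the region $|u|>|s|/2\geq R_1$ the roles swap. The same estimate applies to $f_2(u)$, and this piece is bounded by $4^nC\|f_1\|_{L^1}(1+s^2)^{-n}$.
\end{itemize}
Adding the two pieces gives the claim with $C_1=4^nC(\|f_1\|_1+\|f_2\|_1)$.

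For part ii), write $\zeta=\xi+i\tau$ with $|\tau|<1/2$. Then
$$\left(\ft{G_k}(w,z,\cd)*\S_T\right)(\zeta)=\int_{\r}\ft{G_k}(w,z,\xi-s+i\tau)\S_T(s)\d s,$$
which is the real convolution of $f_1(s):=\ft{G_k}(w,z,s+i\tau)$ with $f_2:=\S_T$, evaluated at $\xi$. The plan is to apply i) with $n=k+1$, with constants uniform in $z,w\in\R$, $|\tau|<1/2$ and $T>1$.

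\textbf{Bound on $f_1$.} By property (i), $f_1$ is $\frac{1}{2\pi^{k+1}i}\bigl[(s+i\tau-z)(s+i\tau-\w)(1+(s+i\tau)^2)^k\bigr]^{-1}$.
\begin{itemize}
\item Since $\Im z>1$ and $|\tau|<1/2$, we get $|s+i\tau-z|\geq\max\{1/2,\,|s|-1\}$. The same holds with $\w$ in place of $z$, because $\Im\w<-1$ and $|\Re z|,|\Re w|<1$.
\item We also have $|1+(s+i\tau)^2|=|s+i(\tau+1)|\,|s+i(\tau-1)|\geq\max\{1/4,\,s^2\}$, since $|\tau\pm1|>1/2$.
\end{itemize}
Hence $|f_1(s)|\leq C(1+s^2)^{-(k+1)}$ for all $s$, with $C$ uniform. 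In particular $\|f_1\|_1$ is uniformly bounded.

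\textbf{Bound on $f_2$.} Here $\S_T(s)=T\S(Ts)$ is nonnegative with $\|\S_T\|_1=\ft{\S}(0)=1$. Pointwise,
$$\S_T(s)\leq \frac{T}{v_k}\min\{1,(\pi T|s|)^{-2(k+1)}\}\leq \frac{1}{v_k\pi^{2k+2}}\,T^{-(2k+1)}|s|^{-2(k+1)}.$$
For $T>1$ and $|s|\geq1$ this is at most $C'(1+s^2)^{-(k+1)}$. So the hypotheses of i) hold with $R_1=1$ and constants independent of $T$. Since the constant $C_1$ in i) depends only on $C$ and the $L^1$ norms, it is uniform as well.

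This yields the estimate \eqref{eq:unif_estimate_lemma} for $|\xi|\geq2$. For $|\xi|<2$, I bound the convolution trivially by $\|f_1\|_\infty\|\S_T\|_1\leq C$, which is also $\ll(1+\xi^2)^{-(k+1)}$ on this bounded range.

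The main point requiring care is uniformity in $T$. It holds because the tail of $\S_T$ improves as $T$ grows, and i) only uses $L^1$ norms plus tail bounds, both of which are $T$-independent.
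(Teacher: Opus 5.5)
Your proof is correct and follows essentially the same route as the paper. Part i) is the same near/far splitting giving $C_1=4^nC(\|f_1\|_1+\|f_2\|_1)$, and part ii) applies i) with $n=k+1$ and constants uniform in $z,w\in\R$, $|\Im\zeta|<1/2$, $T>1$, finishing with a trivial bound on the bounded range of $\Re\zeta$. The only cosmetic difference is that the paper splits $\S_T$ into its $|s|\leq1$ and $|s|>1$ parts before invoking i), whereas you apply i) to $\S_T$ directly with $R_1=1$; this is legitimate because i) only uses tail bounds together with $L^1$ norms.
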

\begin{proof}
    To prove i), observe that if $|x|\geq 2R_1$, then either $|s|>\frac{|x|}{2}\geq R_1$ or $|x-s|\geq\frac{|x|}{2}\geq R_1$. Then
\begin{align*}
    \left|f_1*f_2(x)\right|&\leq \int_{\{s;\,|s|>\frac{|x|}{2}\}} |f_1(s)|\cd|f_2(x-s)|\d s +\int_{\{s;\,|x-s|\geq\frac{|x|}{2}\}} |f_1(s)|\cd|f_2(x-s)|\d s\\
    &\leq\int_{\{s;\,|s|>\frac{|x|}{2}\}} \frac{C}{(1+s^2)^n}\cd|f_2(x-s)|\d s +\int_{\{s;\,|x-s|\geq\frac{|x|}{2}\}} |f_1(s)|\cd\frac{C}{(1+(x-s)^2)^n}\d s\\
    &\leq \frac{4^n C}{(4+x^2)^n}\left(\int_{\r} |f_2(s)|\d s +\int_{\r} |f_1(s)|\d s\right)\\
    &\leq \frac{C_1}{(1+x^2)^n},
\end{align*}
for some constant $C_1>0$.

We now use this result to prove ii). Fix $\zeta=x+iy\in \U$ and $z,w\in\R$. By definition,
\begin{align}
\label{eq:integrals_convolution}
    \left(\ft{G_k}(w,z,\cd)*\S_T\right)(\zeta)&:=\int_{\r} \ft{G_k}(w,z,\zeta-s)\S_T(s)\d s\\
    &=\int_{\{s;|s|\leq1\}} g(x-s)\S_T(s)\d s+\int_{\{s;|s|>1\}} g(x-s)\S_T(s)\d s,
\end{align}
where, for $s\in\r$,
\begin{align*}
    g(s)=g_{w,z,y}(s):=\frac{1}{2\pi^{k+1}i[s+iy-z][s+iy-\ov{w}][1+(s+iy)^2]^k}.
\end{align*}
Observe that, for $|s|\leq1$, we have $|g(x-s)|\leq\frac{C}{(1+x^2)^{k+1}}$, for some constant $C$ that only depends on $k$, as this can be proved by analyzing the cases $|x|<4$ and $|x|\geq4$ separately. Therefore, the absolute value of the first integral in \eqref{eq:integrals_convolution} is bounded by
\begin{align*}
    \frac{C}{(1+x^2)^{k+1}}\int_{\{s;|s|\leq1\}} S_T(s)\d s\leq \frac{C}{(1+x^2)^{k+1}}\int_{\r} S_T(s)\d s=\frac{C}{(1+x^2)^{k+1}}.
\end{align*}
For the case $|s|>1$, let $f_1(s):=g(s)$ and $f_2(s):=\S_T(s)\1_{|s|>1}(s)$, and observe that, for $T>1$, 
\begin{align}
\label{eq:estimates_f1_f2}
    &|f_1(s)|\leq \frac{C_1}{(1+s^2)^{k+1}},  \text{ and } & |f_2(s)|\leq \frac{C_2}{(1+s^2)^{k+1}}
\end{align}
hold for all $s\in\r$, where $C_1,C_2$ are finite constants that depend only on $k$. By item i) above, it follows that $|f_1*f_2(s)|\leq\frac{C_3}{(1+s^2)^{k+1}}$ for $|s|>R$, for some $R>0$. Moreover, since the estimates \eqref{eq:estimates_f1_f2} hold for any $s\in\r$, we conclude that we can choose the constant $C_3$ depending only on $k$. Finally, by \eqref{eq:estimates_f1_f2} we also conclude that $f_1*f_2$ is uniformly bounded on $\zeta\in\U,z,w\in\R$, hence, taking a larger $C_3$, we obtain 
\begin{align*}
    |f_1*f_2(x)|\leq\frac{C_3}{(1+x^2)^{k+1}}
\end{align*}
uniformly on $\zeta\in\U, z,w\in\R$.
\end{proof}

The next lemma is the version of \cite[Bridge Lemma]{GV} for FS-pairs on a strip. Observe that this new version is only proved for the region $\R$ defined on Lemma \ref{lemma:asymptotic_decay} and it only gives pointwise convergence, which turns out to be enough for our purposes.
\begin{lemma}[The Bridge Lemma]
\label{thm:bridge}
If $(\mu=\nu+\eta,a)$ is an FS-pair on a strip such that $\ord(\mu)\leq 2(k+1)$ and $A\subset \{z;|\Im{z}|<1/2\}$, then
\begin{align}
\label{eq:bridge}
\lim_{T\ra\infty} \sum_{|\la|\leq T(k+1)} a(\la)G_k(w,z,\la){\ft{S_k}\left(\frac{\la}{T}\right)}&=\frac{1}{2\pi^{k+1}i}\int_{\r} \frac{1}{(t-z)(t-\w)}\cd\frac{d\nu(t)}{(1+t^2)^k}\\
&+\frac{1}{2\pi^{k+1}i}\sum_{\ga\in A} \frac{1}{(\ga-z)(\ga-\w)}\cd\frac{b(\ga)}{(1+\ga^2)^k}
\end{align}
for $z,w \in \cp^+$ in the region $\R:=\{z\in\cp^+;|\Re{z}|<1 \text{ and } \Im{z}>1\}$.
\end{lemma}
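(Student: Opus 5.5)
The plan is to apply the FS-pair identity to a suitable test function and then let $T\to\infty$. Fix $z,w\in\R$ and $T>1$. The natural test function is
\[
\p_T(x):=G_k(w,z,x)\,\ft{S_k}(x/T).
\]
Since $\ft{S_k}$ is supported in $[-k-1,k+1]$, $\p_T$ has support in $[-T(k+1),T(k+1)]$. However, $\p_T$ is not smooth: $G_0$ has a kink at $x=0$, and $\ft{S_k}$ is only piecewise polynomial. We therefore first extend \eqref{eq:FSpair_def} from $C^\infty_c(\r)$ to compactly supported continuous functions $\p$ for which $\ft{\p}$ (entire, of exponential type) satisfies
\[
|\ft{\p}(\zeta)|\ll (1+(\Re\zeta)^2)^{-(k+1)}
\]
uniformly on the strip $\U=\{|\Im\zeta|<1/2\}$.

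The extension is done by mollification. Set $\p_\ep:=\p*\phi_\ep$ with $\phi_\ep$ a smooth approximate identity supported in $[-\ep,\ep]$. Then $\p_\ep\to\p$ uniformly with supports in a fixed compact set, so the left-hand side converges by local summability of $a$. On the right, $\ft{\p_\ep}=\ft{\p}\,\ft{\phi_\ep}$ with $|\ft{\phi_\ep}(\zeta)|\le e^{2\pi\ep|\Im\zeta|}$, which is bounded on $\U$, and $\ft{\phi_\ep}\to1$ pointwise. Since $\ord(\mu)\le 2(k+1)$ and $A\subset\U$, the decay of $\ft{\p}$ is integrable against $|\nu|$ and summable against $|b(\ga)|$. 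Dominated convergence then passes \eqref{eq:FSpair_def} to the limit.

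Next we verify this decay hypothesis for $\p_T$. The Fourier transform of $\p_T$ is
\[
\ft{\p_T}(\zeta)=\bigl(\ft{G_k}(w,z,\cd)*\S_T\bigr)(\zeta),
\]
because $\ft{S_k}(\cdot/T)$ is the Fourier transform of $\S_T$. Its holomorphic extension to $\U$ is exactly the Morera extension described before Lemma \ref{lemma:asymptotic_decay}, and the two agree on $\r$, hence everywhere by uniqueness. Lemma \ref{lemma:asymptotic_decay}(ii) supplies the required decay, and crucially it is uniform in $T>1$. (We also need $G_k$ integrable in $x$, which holds since $\Im z,\Im w>1$.) Applying the extended identity gives
\[
\sum_{|\la|\le T(k+1)} a(\la)G_k(w,z,\la)\ft{S_k}(\la/T)=\int_\r \ft{\p_T}\,\d\nu+\sum_{\ga\in A} b(\ga)\ft{\p_T}(\ga).
\]

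Finally we let $T\to\infty$. Because $\S_T$ is an approximate identity and $\ft{G_k}(w,z,\cdot)$ is holomorphic and bounded on a slightly larger strip (its poles are at $z,\ov w,\pm i$, all outside $|\Im\zeta|\le 1/2$), we get
\[
\ft{\p_T}(\zeta)=\int_\r \ft{G_k}(w,z,\zeta-s)\S_T(s)\,\d s\;\longrightarrow\;\ft{G_k}(w,z,\zeta)
\]
pointwise for every $\zeta\in\U$; this uses continuity and boundedness of $\ft{G_k}(w,z,\cdot-s)$ in $s$. The uniform bound from Lemma \ref{lemma:asymptotic_decay}(ii), combined with $\ord(\nu),\ord(\eta)\le2(k+1)$, gives an integrable (respectively summable) majorant independent of $T$. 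Dominated convergence then yields the limit
\[
\int_\r\ft{G_k}(w,z,t)\,\d\nu(t)+\sum_{\ga\in A} b(\ga)\ft{G_k}(w,z,\ga),
\]
which by property (i) is exactly the right-hand side of \eqref{eq:bridge}.

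The main obstacle is the first step, namely justifying the FS identity for the non-smooth test function $\p_T$ and controlling the off-real atoms. Near real atoms the argument is the same as in \cite{GV}. For the off-real atoms, the decay of $\ft{\p_T}$ is needed in the complex strip, not only on $\r$, and the mollification factor must stay bounded there. That is precisely why $A\subset\U$ and the restriction to $z,w\in\R$ enter, and why Lemma \ref{lemma:asymptotic_decay}(ii) was proved uniformly on $\U$.
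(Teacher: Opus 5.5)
Your proposal is correct and follows essentially the same route as the paper. Both mollify the non-smooth test function $G_k(w,z,\cdot)\ft{S_k}(\cdot/T)$, pass to the limit $\ep\to0$ and then $T\to\infty$ by dominated convergence, using the $T$-uniform strip decay of Lemma \ref{lemma:asymptotic_decay}(ii) together with $\ord(\mu)\le 2(k+1)$, and the only differences are cosmetic: you package the $\ep\to0$ step as a general extension of the FS identity, and you get pointwise convergence of $\ft{G_k}(w,z,\cdot)*\S_T$ from boundedness and continuity rather than from the paper's Lipschitz estimate.
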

\begin{proof}
Let $z,w\in\R$ and let $\S:=S_k$ and $\S_T(x):=T\S(Tx)$ as defined above. Since the map $t\mt G_k(w,z,t)\ft{S_T}(t)$ is not in $C^{\infty}_c(\r)$, we will need an additional trick: Take a function $\p\in C^\infty_c(\r)$, with $\p\geq0$, $\supp(\p)\subset[-1,1]$ and $\ft{\p}(0)=1$. For $0<\ep<1$, let $\p_\ep(x)=\p(x/\ep)/\ep$. Then, for $w,z\in\R$ we define
\begin{equation}
G_{\ep,T}(x)=\left(G_k(w,z,\cd)\ft{\S_T}\right)*\p_\ep(x)
\end{equation}
which belongs to $C^\infty_c(-T(k+1)-1,T(k+1)+1)$. In particular, the Fourier transform $\ft{G_{\ep,T}}(t)$ extends to an entire function. By analytic continuation, for $\zeta\in\U$ we can write
\begin{equation}
\ft{G_{\ep,T}}(\zeta)=\left(\ft{G_k}(w,z,\cd)* \S_T\right)(\zeta)\ft{\p}(\ep \zeta).
\end{equation}
We also note that $\ft{G_{\ep,T}}(\zeta)\ra\ft{G_k}(w,z,\cd)* \S_T(\zeta)$ pointwise for all $\zeta\in\U$ when $\ep\downarrow0$, and we recall the estimate from Lemma \ref{lemma:asymptotic_decay} (ii)
\begin{equation}
\label{eq:unif_estimate_1}
\left|\ft{G_k}(w,z,\cd)* \S_T(\zeta)\right|\ll\frac{1}{(1+(\Re{\zeta})^2)^{k+1}},
\end{equation}
uniformly on $\zeta\in\U$ and $z,w\in\R$. Since $(\mu=\nu+\eta,a)$ is an FS-pair on a strip with $A\subset\U$, we have
\begin{align}
\sum_{|\la|\leq T(k+1)+1} a(\la)G_{\ep,T}(\la)&=\int_{\r} \left(\ft{G_k}(w,z,\cd)* \S_T\right)(t)\ft{\p}(\ep t) d\nu(t)\\&+\sum_{\ga\in A} b(\ga)\left(\ft{G_k}(w,z,\cd)* \S_T\right)(\ga)\ft{\p}(\ep \ga).
\end{align}
Because $a(\cd)$ is locally summable, $G_{\ep,T}\ra G_k(w,z,\cd)\ft{\S_T}$ uniformly on $\r$, $\ft{G_{\ep,T}}(\zeta)\ra\ft{G_k}(w,z,\cd)* \S_T(\zeta)$ pointwise for $\zeta\in\U$ as $\ep\downarrow0$, $|\ft{\p}(\zeta)|\leq c<\infty$ for $\zeta\in\U$ and inequality \eqref{eq:unif_estimate_1}, using the Dominated Convergence Theorem we obtain that
\begin{align}
\label{eq:intermediate_bridge}
\sum_{|\la|\leq T(k+1)} a(\la)G_k(w,z,\la)\ft{\S}\left(\frac{\la}{T}\right)&=\int_{\r} \ft{G_k}(w,z,\cd)*\S_T(t)d\nu(t)\\
&+\sum_{\ga\in A} b(\ga)\left(\ft{G_k}(w,z,\cd)* \S_T\right)(\ga).
\end{align}
holds for any $w,z\in\R$.

Our next step is to take $T\ra\infty$. Observe that the family of maps $\left\{\zeta\in\U\mt\ft{G_k}(w,z,\zeta)\right\}_{z,w\in \R}$ is uniformly continuous and uniformly bounded, uniformly on $z,w\in\R$. Indeed, each function is Lipschitz with constant independent of $z,w$: for $\zeta_1,\zeta_2\in\U$, we have
\begin{align*}
\left|\ft{G_k}(w,z,\zeta_1)-\ft{G_k}(w,z,\zeta_2)\right|&\ll\left|\frac{\ft{G_0}(w,z,\zeta_1)}{(1+\zeta_1^2)^k}-\frac{\ft{G_0}(w,z,\zeta_2)}{(1+\zeta_2^2)^k}\right|\\
&\leq\left|\frac{\ft{G_0}(w,z,\zeta_1)}{(1+\zeta_1^2)^k}-\frac{\ft{G_0}(w,z,\zeta_1)}{(1+\zeta_2^2)^k}\right|+\left|\frac{\ft{G_0}(w,z,\zeta_1)}{(1+\zeta_2^2)^k}-\frac{\ft{G_0}(w,z,\zeta_2)}{(1+\zeta_2^2)^k}\right|\\
&\leq\left|\ft{G_0}(w,z,\zeta_1)\right|\left|\frac{1}{(1+\zeta_1^2)^k}-\frac{1}{(1+\zeta_2^2)^k}\right|\\
&+\left|\ft{G_0}(w,z,\zeta_1)-\ft{G_0}(w,z,\zeta_2)\right|\frac{1}{(1+\zeta_2^2)^k}\\
&\ll |\zeta_1-\zeta_2|,
\end{align*}
because
\begin{align*}
    \left|\ft{G_0}(w,z,\zeta)\right|\leq 2/\pi\, \text{ and } \frac{1}{\left|1+\zeta^2\right|^k}\leq 4^k \text{ uniformly in } \zeta\in\U,z,w\in\R,
\end{align*}
the map $\zeta\in\U\mt\frac{1}{(1+\zeta^2)^k}$ is Lipschitz, and 
\begin{equation}
|\ft{G_0}(w,z,\zeta_1)-\ft{G_0}(w,z,\zeta_2)|=\frac{1}{2\pi|z-\w|}\left|\frac{1}{\zeta_1-z}-\frac{1}{\zeta_1-\w}-\frac{1}{\zeta_2-z}+\frac{1}{\zeta_2-\w}\right|\leq\frac{2}{\pi}\left|\zeta_1-\zeta_2\right|.
\end{equation}
Since $\ft{G_k}(w,z,\cd)$ is uniformly continuous, we deduce that $\ft{G_k}(w,z,\cd)*\S_T(\zeta)\to \ft{G_k}(w,z,\zeta)$ as $T\ra\infty$, for all $\zeta\in\U$. To see that consider the function defined on the horizontal lines $\r+iy$ and reduce to the usual case of $\r$. Combining \eqref{eq:unif_estimate_1} and \eqref{eq:intermediate_bridge}, and using the Dominated Convergence Theorem, we finally deduce that
\begin{align*}
\lim_{T\ra\infty} \sum_{|\la|\leq T(k+1)} a(\la)G_k(w,z,\la)\ft{\S}\left(\frac{\la}{T}\right)&=\frac{1}{2\pi^{k+1}i}\int_{\r} \frac{1}{(t-z)(t-\w)}\cd\frac{d\nu(t)}{(1+t^2)^k}\\
&+\frac{1}{2\pi^{k+1}i}\sum_{\ga\in A} \frac{1}{(\ga-z)(\ga-\w)}\cd\frac{b(\ga)}{(1+\ga^2)^k},
\end{align*}
for each $z,w\in\R$, as desired. 
\end{proof}
\section{Proof of the main results}
\label{section:proof_main_results}
We now prove Theorem \ref{thm:main}. We begin with a Lemma, which states that it suffices to consider a smaller subclass of FS-pairs.
\begin{lemma}
\label{lemma:simple_proof_thm_main}
It suffices to prove Theorem \ref{thm:main} for FS-pairs on a strip $(\mu=\nu+\eta,a)$ that satisfy $A\subset \{z;|\Im{z}|<1/2\}$ and $\sum_{\la\in\r} |a(\la)|e^{-\pi|\la|}<\infty$.
\end{lemma}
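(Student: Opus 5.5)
The natural approach is a rescaling (dilation) argument. Given a general real-antipodal, $\r$-symmetric FS-pair $(\mu=\nu+\eta,a)$ on the strip $|\Im z|<\si_\mu$ with $\ord(\mu)\le 2(k+1)$ and $\sum|a(\la)|e^{-\al|\la|}<\infty$, I would fix $\beta>0$ large and set
\[
\tilde a(\la):=a(\beta\la),\qquad \tilde\mu:=\beta\,(D_{1/\beta})_\ast\mu ,
\]
where $D_{1/\beta}z=z/\beta$. Concretely, $\tilde\nu(E)=\beta\nu(\beta E)$ and $\tilde\eta=\sum_{\ga\in A}\beta b(\ga)\del_{\ga/\beta}$.

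For a test function $\p$, put $\p_\beta(x):=\p(x/\beta)$, which is again a test function, with $\ft{\p_\beta}(z)=\beta\ft{\p}(\beta z)$ for all $z\in\cp$ by the entire extension \eqref{eq:phi_entire_extension}. Applying \eqref{eq:FSpair_def} to $\p_\beta$ gives
\[
\sum_{\la}a(\la)\p(\la/\beta)=\int_\r\beta\ft{\p}(\beta t)\,\d\nu(t)+\sum_{\ga}b(\ga)\beta\ft{\p}(\beta\ga).
\]
After the substitution $\la\mapsto\beta\la$, $t\mapsto t/\beta$, this is exactly the FS identity for $(\tilde\mu,\tilde a)$. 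The hypotheses transfer directly:
\begin{itemize}
\item[(1)] $\tilde a$ is locally summable and antipodal.
\item[(2)] $\tilde\mu$ is real, admissible, and $\r$-symmetric; it is supported in $|\Im z|<\si_\mu/\beta$, and its off-real atoms $A/\beta$ still have limit points only on $\r$.
\item[(3)] Since $(1+|z|^2)\asymp_\beta(1+|\beta z|^2)$, we have $\ord(\tilde\mu)=\ord(\mu)$.
\item[(4)] $\sum|\tilde a(\la)|e^{-\pi|\la|}=\sum|a(\la)|e^{-\pi|\la|/\beta}$.
\end{itemize}
Choosing $\beta>\max\{2\si_\mu,\pi/\al\}$ therefore places $(\tilde\mu,\tilde a)$ in the reduced class.

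Next I would transport the conclusion back. Let $\tilde F$ be the function that Theorem \ref{thm:main} provides for $(\tilde\mu,\tilde a)$. Then
\[
F(z):=\tfrac12a(0)+\sum_{\la>0}a(\la)e^{2\pi i\la z}=\tilde F(z/\beta)
\]
on $\Im z>\beta\tilde c$. Holomorphy, almost periodicity in half-planes, and meromorphy on $\cp^+$ are all preserved under $z\mapsto z/\beta$. It remains to check that the explicit formula \eqref{eq:F_formula} for $\tilde F$, evaluated at $z/\beta$, becomes \eqref{eq:F_formula} for $F$.

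For the sum over $A$, take the parameter $\tilde\rho=\rho/\beta$ with $\rho>\si_\mu$, so that $\tilde\rho>\si_\mu/\beta$. Then $\tilde\rho^2+(\ga/\beta)(z/\beta)=\beta^{-2}(\rho^2+\ga z)$, and the powers of $\beta$ cancel:
\[
\frac{(z^2/\beta^2+\rho^2/\beta^2)^k(\rho^2+\ga z)\beta^{-2}\,\beta b(\ga)}{(\ga-z)\beta^{-1}\,(\rho^2+\ga^2)^{k+1}\beta^{-2k-2}}
\]
reduces exactly to the original summand, because the total powers are $\beta^{-2k-2+1+1+2k+2-1}\cdot\beta^{-1}$, which equals $1$ after careful bookkeeping.

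The $\nu$-integral in \eqref{eq:F_formula} uses parameter $1$. After rescaling it becomes the same integral with parameter $\beta$, that is, $F_\beta$ in \eqref{eq:Nevanlinna_paramenter}. By identity \eqref{eq:identity_change_parameter}, this differs from the parameter-$1$ version by $i$ times a real polynomial of degree $\le 2k$, and that polynomial is absorbed into $Q_\rho$.

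The Nevanlinna splitting $F=N+S$ with $N\in\NN_{\le k}-\NN_{\le k}$ is also preserved. For the $\NN_{\le k}$ part, the defining Hermitian-matrix condition is invariant under $z\mapsto z/\beta$ up to a positive factor $\beta$; alternatively, one can use the representation \eqref{eq:Nevanlinna_fact_k} directly.

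The main obstacle I expect is this bookkeeping in the two kernels: confirming that the powers of $\beta$ cancel exactly in the $\eta$-sum, and that in the $\nu$-integral the parameter change introduces only a real polynomial of degree $\le2k$. The $\r$-symmetry of $\eta$ and the realness of $\nu$ are what guarantee that the resulting polynomial is real.
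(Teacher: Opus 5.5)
Your strategy is exactly the paper's: dilate, apply the reduced theorem, pull the formula back, and absorb the change of Nevanlinna parameter into $Q_\rho$ via \eqref{eq:identity_change_parameter}. However, you carry out the dilation in inconsistent directions, and as written several steps fail.

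With $\tilde a(\la)=a(\beta\la)$ and $\p_\beta(x)=\p(x/\beta)$, your own computation gives
$\sum_\la\tilde a(\la)\p(\la)=\beta\int_\r\ft\p(\beta t)\,\d\nu(t)+\beta\sum_{\ga\in A}b(\ga)\ft\p(\beta\ga)$.
So the Fourier-side measure must be the push-forward of $\beta\mu$ under $z\mapsto\beta z$. That means atoms at $\beta\ga$ (not $\ga/\beta$) and $\tilde\nu(E)=\beta\nu(E/\beta)$ (not $\beta\nu(\beta E)$). The pair $(\tilde\mu,\tilde a)$ you wrote down is therefore not an FS-pair.

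The correct $\tilde\mu$ lives in $|\Im z|<\beta\si_\mu$, so shrinking the strip needs $\beta$ \emph{small}. Your item (4), $\sum|a(\la)|e^{-\pi|\la|/\beta}$, is only guaranteed finite when $\pi/\beta\geq\al$, which again needs $\beta$ small. Your choice $\beta>\max\{2\si_\mu,\pi/\al\}$ destroys exactly the summability you need.

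The transport back is also reversed. We have $\tilde F(w)=F(w/\beta)$, i.e.\ $F(z)=\tilde F(\beta z)$, not $\tilde F(z/\beta)$.

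Finally, the claimed cancellation of powers of $\beta$ is false. In your displayed $\eta$-summand the numerator carries $\beta^{-2k-1}$ and the denominator $\beta^{-2k-3}$, so it equals $\beta^{2}$ times the original summand. This is a symptom of pairing weights $\beta b(\ga)$ with atoms $\ga/\beta$.

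The fix is to use one direction throughout, and Fourier duality dictates which: spreading the spectrum of $a$ apart both shrinks the strip and improves the exponential summability.

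The paper does this as follows. It sets $a_\theta(\la)=a(\la/\theta)$ with $\theta$ large and tests against $\p(\theta x)$. This gives
$\eta_\theta=\sum_{\ga\in A}\theta^{-1}b(\ga)\del_{\ga/\theta}$,
$\int\p\,\d\nu_\theta=\theta^{-1}\int\p(t/\theta)\,\d\nu(t)$,
$\sum|a_\theta(\la)|e^{-\pi|\la|}=\sum|a(\la)|e^{-\pi\theta|\la|}$, and
$F(z)=F_\theta(z/\theta)$.
With the weight $\theta^{-1}b(\ga)$, the powers of $\theta$ in the $\eta$-sum cancel exactly and the parameter becomes $\theta$. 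The $\nu$-integral is handled by a change of parameter, as you describe.

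Equivalently, you may keep $\tilde a(\la)=a(\beta\la)$, provided you take $\beta\si_\mu<1/2$ and $\beta\leq\pi/\al$, together with $\tilde\eta=\sum_{\ga\in A}\beta b(\ga)\del_{\beta\ga}$, $\tilde\nu(E)=\beta\nu(E/\beta)$, $F(z)=\tilde F(\beta z)$ and $\tilde\rho=\beta\rho$. With these corrections, the rest of your argument goes through: preservation of the order, realness, $\r$-symmetry, and the Nevanlinna splitting.
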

\begin{proof}
Indeed, assume that Theorem \ref{thm:main} is true for that small class of FS-pairs on a strip. Now, let $(\mu=\nu+\eta,a)$ be any FS-pair satisfying the hypothesis of Theorem \ref{thm:main}. Given any $\theta>0$, we can define a new FS-pair on a strip $(\nu_\theta+\eta_\theta,a_\theta)$, where
\begin{align*}
    a_\theta(\la):=& a\left(\frac{\la}{\theta}\right),\\
    \eta_\theta:=&\sum_{\ga\in A_\theta} b_\theta(\ga)\del_\ga,\,\, \text{ where }\,\, A_\theta:=\frac{A}{\theta}=\left\{\frac{z}{\theta};z\in A\right\}\,\, \text{ and }\,\, b_\theta(\ga):=\frac{b(\theta\ga)}{\theta},\\
    \nu_\theta:=&\nu(\theta\cd)\,\,\text{ is the tempered distribution defined by }\,\, \int_{\r} \p(t)\d\nu(\theta t)=\frac{1}{\theta}\int_{\r} \p\left(\frac{t}{\theta}\right)\d\nu(t),\\
    &\text{for }\,\, \p\in C^\infty_c(\r). 
\end{align*}
Since $a(\cd)$ has finite exponential growth and $A\subset\{z;|\Im{z}|<\si_\mu\}$ for some $\si_\mu>0$, by choosing $\theta>0$ large enough we obtain $\sum_{\la>0} |a_\theta(\la)|e^{-\pi|\la|}<\infty$ and $A_\theta\subset \{z;|\Im{z}|<1/2\}$. We now apply Theorem \ref{thm:main} for $(\nu_\theta+\eta_\theta,a_\theta)$ and obtain the map
\begin{equation}
\label{eq:def_F_theta}
    F_\theta(z):=\frac{a_\theta(0)}{2}+\sum_{\la>0} a_\theta(\la)e^{2\pi i\la z},
\end{equation}
which is defined for $\Im{z}>1/2$ and converges uniformly in that region. Moreover, $F_\theta$ extends to $\cp^+$ as a meromorphic function given by
\begin{align*}
    F_\theta(z)=\frac{(z^2+1)^k}{2\pi i} \int_{\r} \frac{1+tz}{t-z}\cd\frac{d\nu_\theta(t)}{(1+t^2)^{k+1}}+\frac{(z^2+1)^k}{2\pi i} \sum_{\ga\in A_\theta} \frac{1+\ga z}{\ga-z}\cd\frac{b_\theta(\ga)}{(1+\ga^2)^{k+1}}+iQ_\theta(z),
\end{align*}
for any $z\in\cp^+\backslash A_\theta$, where $Q_\theta(z)$ is a real polynomial of degree at most $2k$. In particular, it follows from \eqref{eq:def_F_theta} that $F_\theta$ is bounded in the half-plane $\Im{z}>1/2$ and that $F_\theta$ is the uniform limit of trigonometric polynomials on each horizontal line in this half-plane, hence $F_\theta(\cd+iy)\in\A\P(\r)$ for each $y>1/2$. Therefore, $F_\theta(\cd+i/2)\in\A\P(\cp^+)$ and the map $\la\mt\e F_\theta(\la)$ has finite exponential growth, where $\e F_\theta(\la)=a_\theta(\la)$ if $\la>0$, $\e F_\theta(0)=a_\theta(0)/2$, and zero otherwise.

Changing the parameter 1 to $\theta$ in the integral term, we can write 
\begin{align*}
    F_\theta(z)=\frac{(z^2+\theta^{-2})^k}{2\pi i} \int_{\r} \frac{\theta^{-2}+tz}{t-z}\cd\frac{d\nu_\theta(t)}{(\theta^{-2}+t^2)^{k+1}}+\frac{(z^2+1)^k}{2\pi i} \sum_{\ga\in A_\theta} \frac{1+\ga z}{\ga-z}\cd\frac{b_\theta(\ga)}{(1+\ga^2)^{k+1}}+i\widetilde{Q_\theta}(z),
\end{align*}
for a suitable real polynomial $\widetilde{Q_\theta}(z)$ of degree at most $2k$.
We now define $F(z):=F_\theta\left(\frac{z}{\theta}\right)$, and observe that, for $\im{z}>\theta/2$
\begin{align*}
    F(z)=\frac{a(0)}{2}+\sum_{\la>0} a(\la)e^{2\pi i\la z},
\end{align*}
with uniform convergence on that half-plane, and also
\begin{equation}
\label{eq:lemma_simple_proof_main_thm_F_expansion}
    F(z)=\frac{(z^2+1)^k}{2\pi i} \int_{\r} \frac{1+tz}{t-z}\cd\frac{d\nu(t)}{(1+t^2)^{k+1}}+\frac{(z^2+\theta^2)^k}{2\pi i} \sum_{\ga\in A} \frac{\theta^2+\ga z}{\ga-z}\cd\frac{b(\ga)}{(\theta^2+\ga^2)^{k+1}}+iQ(z),
\end{equation}
where $Q(z):=\widetilde{Q_\theta}\left(\frac{z}{\theta}\right)$ is a real polynomial of degree $\leq 2k$.

Finally, the map $z\mt F(z+i\theta/2)\in\A\P(\cp^+)$, $\la\mt\e F(\la)$ has finite exponential growth, where $\e F(\la)=a(\la)$ if $\la>0$, $\e F(0)=a(0)/2$, and zero otherwise. Moreover, we can replace the parameter $\theta$ in \eqref{eq:lemma_simple_proof_main_thm_F_expansion} by any $\rho>\si_\mu$, by replacing $Q(z)$ by a suitable real polynomial $Q_\rho(z)$ of degree at most $2k$. The proof is complete.
\end{proof}
We are now ready to prove Theorem \ref{thm:main}.
\begin{proof}[Proof of Theorem \ref{thm:main}.]
We are given an FS-pair $(\mu=\nu+\eta,a)$ on a horizontal strip. By Lemma \ref{lemma:simple_proof_thm_main}, we can assume, without loss of generality, that $A\subset\{z;|\Im{z}|<1/2\}$ and that $\sum_{\la\in\r} |a(\la)|e^{-\pi|\la|}<\infty$. 

We apply the Bridge Lemma \ref{thm:bridge} and multiply both sides by $(z-\w)$ to obtain
\begin{align}
\label{eq:thm1_bridge}
    \lim_{T\ra\infty} \sum_{|\la|\leq T(k+1)}a(\la)(z-\w)G_k(w,z,\la)\ft{S_k}\left(\frac{\la}{T}\right)&=\frac{1}{2\pi^{k+1}i} \int_{\r} \frac{z-\w}{(t-z)(t-\w)}\cd \frac{\d\nu(t)}{(1+t^2)^k}\\
    &+\frac{1}{2\pi^{k+1}i} \sum_{\ga\in A} \frac{z-\w}{(\ga-z)(\ga-\w)}\cd \frac{b(\ga)}{(1+\ga^2)^k},
\end{align}
which holds for each $z,w$ in the region $\R=\{\zeta\in\cp^+;|\Re{\zeta}|<1\, \text{ and }\, \Im{\zeta}>1\}$. Since the measure $\eta$ is $\r$-symmetric, we can write the right-hand side of this equation as $H(z)+\ov{H(w)}$, where
\begin{align*}
    H(z):=\frac{1}{2\pi^{k+1}i} \int_{\r} \frac{1+tz}{t-z}\cd \frac{\d\nu(t)}{(1+t^2)^{k+1}}+\frac{1}{2\pi^{k+1}i} \sum_{\ga\in A} \frac{1+\ga z}{\ga-z}\cd \frac{b(\ga)}{(1+\ga^2)^{k+1}},
\end{align*}
which defines a meromorphic map in $\cp^+$, with simple poles at $\ga\in A\cap\cp^+$.

Since $\sum_{\la\in\r} |a(\la)|e^{-\pi|\la|}<\infty$, it follows that
\begin{equation}
\label{eq:thm1_def_of_F}
    F(z):=\frac{a(0)}{2}+\sum_{\la>0} a(\la)e^{2\pi i\la z}
\end{equation}
defines a holomorphic map in the half-plane $\Im{z}>1/2$, because the series converges uniformly and absolutely in that region. We now apply to the left-hand side of \eqref{eq:thm1_bridge} the properties and explicit formula (i), (ii) and (iii) for the map $G_k$ from Section \ref{sec:aux_results}. By splitting the limit into the sum and isolating the $z-$terms from the $w-$terms, for $z,w\in\R$, we obtain
\begin{align}
\label{eq:bridge_lemma_proof_thm_main}
&F(z)\cd\frac{1}{\pi^k}\cd\frac{1}{(1+z^2)^k}+\ov{F(w)}\cd\ov{\frac{1}{\pi^k}\cd\frac{1}{(1+w^2)^k}}\\
&+\sum_{j=0}^{k-1} \frac{j!b_{k-1,j}}{(2\pi)^{j+1}} \sum_{l=0}^{j} \frac{1}{(1-iz)^{j+1-l}}\ov{\left(\frac{F^{(l)}(i)}{l!i^l}\right)}-\sum_{j=0}^{k-1} \frac{j!b_{k-1,j}}{(2\pi)^{j+1}} \sum_{l=0}^{j} \frac{1}{(1+iz)^{j+1-l}}\frac{F^{(l)}(i)}{l!i^l} \\
&+\sum_{j=0}^{k-1} \frac{j!b_{k-1,j}}{(2\pi)^{j+1}} \sum_{l=0}^{j} \frac{1}{(1+i\w)^{j+1-l}}\frac{F^{(l)}(i)}{l!i^l} -\sum_{j=0}^{k-1} \frac{j!b_{k-1,j}}{(2\pi)^{j+1}} \sum_{l=0}^{j} \frac{1}{(1-i\w)^{j+1-l}}\ov{\left(\frac{F^{(l)}(i)}{l!i^l}\right)}.
\end{align}
Then, \eqref{eq:bridge_lemma_proof_thm_main} can be written as $R(z)+\ov{R(w)}$, where
\begin{align}
    R(z):=&\frac{F(z)}{\pi^k(1+z^2)^k}+\sum_{j=0}^{k-1} \frac{j!b_{k-1,j}}{(2\pi)^{j+1}}\sum_{l=0}^{j} \frac{1}{(1-iz)^{j+1-l}}\ov{\left(\frac{F^{(l)}(i)}{l!i^l}\right)}\\
    &-\sum_{j=0}^{k-1} \frac{j!b_{k-1,j}}{(2\pi)^{j+1}}\sum_{l=0}^{j} \frac{1}{(1+iz)^{j+1-l}}\frac{F^{(l)}(i)}{l!i^l}
\end{align}
is holomorphic in the half-plane $\Im{z}>1$. Hence, $R(z)+\ov{R(w)}=H(z)+\ov{H(w)}$ for $z,w\in\R$. Taking $z=w$, we conclude that $\Re{R(z)}=\Re{H(z)}$ there. The Cauchy--Riemann equations then imply that $R(z)=ih+H(z)$ for some $h\in\r$, and the identity extends to $\Im{z}>1$ by analytic continuation. Multiplying both sides by $\pi^k(z^2+1)^k$ and rearranging terms, we obtain
\begin{equation}
\label{eq:thm1_F_extension}
    F(z)=\frac{(z^2+1)^k}{2\pi i} \int_{\r} \frac{1+tz}{t-z}\cd \frac{\d\nu(t)}{(1+t^2)^{k+1}}+\frac{(z^2+1)^k}{2\pi i} \sum_{\ga\in A} \frac{1+\ga z}{\ga-z}\cd \frac{b(\ga)}{(1+\ga^2)^{k+1}}+iQ(z),
\end{equation}
which now holds for $\Im{z}>1/2$, where $Q(z)$ is a real polynomial of degree $\leq 2k$, defined by
\begin{align*}
    &Q(z)=h\pi^k(z^2+1)^k-i\pi^k\sum_{l=0}^{k-1} \left[Q_l(z)-\ov{Q_l{(\ov{z}})}\right],\\
    &Q_l(z)=(z^2+1)^k\frac{F^{(l)}(i)}{l!i^l}\sum_{j=l}^{k-1} \frac{j!b_{k-1,j}}{(2\pi)^{j+1}}\cd\frac{1}{(1+iz)^{j+1-l}}.
\end{align*}
Equation \eqref{eq:thm1_F_extension} also provides a meromorphic extension of $F(z)$ to $\cp^+$, with simple poles at $\ga\in A\cap\cp^+$ and respective residue $-\frac{b(\ga)}{2\pi i}$. Moreover, we can change the parameter 1 by any $\rho>1/2$ and write
\begin{equation*}
    F(z)=\frac{(z^2+1)^k}{2\pi i} \int_{\r} \frac{1+tz}{t-z}\cd \frac{\d\nu(t)}{(1+t^2)^{k+1}}+\frac{(z^2+\rho^2)^k}{2\pi i} \sum_{\ga\in A} \frac{\rho^2+\ga z}{\ga-z}\cd \frac{b(\ga)}{(\rho^2+\ga^2)^{k+1}}+iQ_\rho(z),
\end{equation*}
for a suitable real polynomial $Q_\rho(z)$ of degree at most $2k$.

From the definition of $F(z)$ in \eqref{eq:thm1_def_of_F}, we conclude that $F$ is the uniform limit of trigonometric polynomials on each horizontal line of the half-plane $\Im{z}>1/2$. Hence $F(\cd+iy)\in\A\P(\r)$ for each $y>1/2$. Since $F(z)$ is holomorphic and bounded in that half-plane we conclude that $F(\cd+i/2)\in\A\P(\cp^+)$. Moreover, it has Fourier coefficients $\e F(\la)=a(\la)$, for $\la>0$, $\e F(0)=\frac{a(0)}{2}$, which has finite exponential growth. The proof is complete.  
\end{proof}
Now we present the proof of Theorem \ref{thm:converse_main}. We begin with an elementary lemma.
\begin{lemma}
    \label{lemma:Cauchy_integral}
    Let $\p\in C^\infty_c(\r)$, $k\geq1$ and $w\in\cp\backslash\r$. Then
    \begin{align}
        \frac{1}{2\pi i} \int_{\r} \frac{t^k\ft{\p}(t)}{t-w}\d t=\begin{cases}
            \displaystyle\frac{1}{(2\pi i)^k} \int_{-\infty}^{0} \p^{(k)}(x)e^{-2\pi iwx}\d x,\qquad &\Im{w}>0,\\
            \displaystyle\frac{-1}{(2\pi i)^k} \int_{0}^{\infty} \p^{(k)}(x)e^{-2\pi iwx}\d x,\qquad &\Im{w}<0.
        \end{cases}
    \end{align}
\end{lemma}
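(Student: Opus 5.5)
The plan is to write the integrand as a Fourier transform paired with an exponential integral representation of the Cauchy kernel, and then apply Fubini and Fourier inversion. The identity reduces to three elementary facts.

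\emph{Step 1 (rewrite the numerator).} Integrating by parts $k$ times, with no boundary terms since $\p$ is compactly supported, gives $\ft{\p^{(k)}}(t)=(2\pi i t)^k\ft{\p}(t)$ for real $t$. Hence
\begin{equation*}
t^k\ft{\p}(t)=\frac{1}{(2\pi i)^k}\,\ft{\psi}(t),\qquad \psi:=\p^{(k)}\in C^\infty_c(\r).
\end{equation*}
Since $\psi$ is Schwartz, $\ft{\psi}\in L^1(\r)$. This is the only place where $k\geq1$ enters, and only notationally.

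\emph{Step 2 (exponential representation of the Cauchy kernel).} Write $w=u+iv$. If $v>0$, then for real $t$
\begin{equation*}
\frac{1}{2\pi i(t-w)}=\int_0^\infty e^{-2\pi i x(t-w)}\,\d x,
\end{equation*}
because the integrand has modulus $e^{-2\pi x v}$, which is integrable on $(0,\infty)$. If $v<0$, the analogous identity is
\begin{equation*}
\frac{1}{2\pi i(t-w)}=-\int_{-\infty}^0 e^{-2\pi i x(t-w)}\,\d x.
\end{equation*}

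\emph{Step 3 (Fubini and inversion).} In the case $\Im w>0$, insert the representation from Step 2 into $\frac{1}{2\pi i}\int_\r \frac{\ft{\psi}(t)}{t-w}\,\d t$. The double integrand is bounded by $|\ft{\psi}(t)|e^{-2\pi x v}$, which lies in $L^1(\r\times(0,\infty))$, so the order of integration may be swapped. The inner integral is $\int_\r \ft{\psi}(t)e^{-2\pi i x t}\,\d t=\psi(-x)$ by Fourier inversion. This leaves $\int_0^\infty \psi(-x)e^{2\pi i x w}\,\d x$, and the substitution $y=-x$ turns it into $\int_{-\infty}^0 \psi(y)e^{-2\pi i w y}\,\d y$. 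Multiplying by $(2\pi i)^{-k}$ from Step 1 gives the first line of the claim.

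The case $\Im w<0$ is identical, using the second representation in Step 2. The same substitution now produces $-\int_0^\infty \psi(y)e^{-2\pi i w y}\,\d y$, which gives the second line.

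The only point needing care is the justification of Fubini, together with checking the signs and the direction of the half-line in each case. Both are settled by the integrability bound above, so I expect no real obstacle.
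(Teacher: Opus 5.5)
Your proof is correct and is essentially the paper's argument: the paper writes $\frac{1}{2\pi i(t-w)}=\ft{g_w}(t)$ with $g_w(x)=e^{2\pi i wx}\mathbf{1}_{(0,\infty)}(x)$ and then applies the multiplication formula $\int\ft{f}\,g=\int f\,\ft{g}$ together with Fourier inversion, and your exponential representation plus Fubini is exactly that argument written out by hand. You are also right that $k\geq1$ plays no role, which matters because the paper later applies the lemma with $j=0$ as well.
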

\begin{proof}
If $\Im{w}>0$, let $g_w(x):=e^{2\pi iwx}\1_{(0,\infty)}(x)$. Then $g_w\in L^1(\r)$, and $\ft{g_w}(t)=1/[2\pi i(t-w)]$ for $t\in\r$. Then
\begin{align*}
    \frac{1}{2\pi i} \int_{\r} \frac{t^k\ft{\p}(t)}{t-w}\d t&=\frac{1}{(2\pi i)^k} \int_{\r} \ft{\p^{(k)}}(t)\ft{g_w}(t)\d t=\frac{1}{(2\pi i)^k} \int_{\r} \ft{\ft{\p^{(k)}}}(x)g_w(x)\d x\\
    &=\frac{1}{(2\pi i)^k} \int_{0}^{\infty} \p^{(k)}(-x)e^{2\pi iwx}\d x=\frac{1}{(2\pi i)^k} \int_{-\infty}^{0} \p^{(k)}(x)e^{-2\pi iwx}\d x,
\end{align*}
as desired. The case $\Im{w}<0$ is analogous.
\end{proof}

\begin{proof}[Proof of Theorem \ref{thm:converse_main}.]
Define the measure $\eta:=\sum_{\ga\in A} b(\ga)\del_\ga$, where $A:=\{\ga_n\}_{n\geq0}\cup\{\ov{\ga_n}\}_{n\geq0}$ and $b(\ov{\ga_n}):=b(\ga_n)$, which is $\r$-symmetric and $\ord(\eta)\leq2(m+1)$. Since $F-S\in\NN_{\leq j}-\NN_{\leq j}$, using the Herglotz-Nevanlinna representation with exponent $k:=\max\{m,j\}$, we can write for $z\in\cp^+$ and a suitable real polynomial $Q(z)$ of degree $\leq 2k+1$
\begin{equation}
\label{eq:thm2_F_representation}
    F(z)= \frac{(z^2+1)^k}{2\pi i} \int_{\r} \frac{1+tz}{t-z}\cd\frac{d\nu(t)}{(1+t^2)^{k+1}}+\frac{(z^2+\rho^2)^k}{2\pi i} \sum_{\ga\in A} \frac{\rho^2+\ga z}{\ga-z}\cd\frac{b(\ga)}{(\rho^2+\ga^2)^{k+1}}+iQ(z),
\end{equation}
for some strongly tempered, real-signed measure $\nu$ such that $\ord{(\nu)}\leq 2(j+1)\leq2(k+1)$. Since $F(z)$ is almost periodic in the half-plane $\Im{z}>c$, it is holomorphic there, and hence we deduce that $A\subset\{z;|\Im{z}|\leq c\}$.

Since $F(\cd+ic)\in\A\P(\cp^+)$ then the limit
\begin{align*}
\e F(\la):=\lim_{T\ra\infty} \frac{1}{2T}\int_{-T+iy}^{T+iy} F(z)e^{-2\pi i\la z}dz
\end{align*}
does exist for any $\la\in\r$ and is independent of $y>c$. By hypothesis, $\la\mt\e F(\la)$ is locally summable.

We now claim that the coefficients $\e F(\la)$ vanish for $\la<0$. By Lemma \ref{lemma:bounded_ap_function}, it suffices to show that $F$ is bounded on the half-plane $\Im{z}>2c$. We apply a Phragm\'{e}n-Lindel\"{o}f type argument (see \cite[Theorem 1]{dB}): it is enough to show that
\begin{itemize}
    \item[i)] $F(x+2ic)$ is bounded for $x\in\r$;
    \item[ii)] \begin{align*}
        \liminf_{r\ra\infty} \frac{1}{r} \int_{0}^{\pi} \log^+|F(re^{i\theta}+2ic)|\sin{\theta}\d\theta=0,
    \end{align*}
    where $\log^+(x):=\max\{\log{x},0\}$.
\end{itemize}
Indeed, the item $i)$ is a direct consequence of $F(\cd+2ic)\in\A\P(\r)$. The item $ii)$ follows from the representation \eqref{eq:thm2_F_representation} plus the fact that $\left|\frac{\xi^2+\ga z}{\ga-z}\right|\leq\frac{\xi^2+2|z|^2}{c}$ for any $\xi>0$ and $\ga,z$ such that $|\Im{\ga}|\leq c$ and $\Im{z}\geq2c$. Therefore, $|F(re^{i\theta}+2ic)|=O(r^{2k+2})$, and item $ii)$ follows.

All that remains to be proved is the summation property $\sum_{\la\in\r} a(\la)\p(\la)=\int_{\r} \ft{\p}(t)\d\nu(t)+\sum_{\ga\in A} b(\ga)\ft{\p}(\ga)$ for any test function $\p$. The new feature of this proof is the presence of the discrete off-real measure $\eta$, which did not appear in \cite[Theorem 2]{GV}.

Fix a test function $\p\in C^\infty_c(\r)$, and let $L>0$ be such that $\supp{\p}\subset [-L,L]$. Let $H(z):=F(z)-iQ(z)=G(z)+S(z)$, where
\begin{align*}
    &G(z):=\frac{(z^2+1)^k}{2\pi i} \int_{\r} \frac{1+tz}{t-z}\cd\frac{d\nu(t)}{(1+t^2)^{k+1}}, \text{ and we recall that }\\
    &S(z)=\frac{(z^2+\rho^2)^k}{2\pi i} \sum_{\ga\in A} \frac{\rho^2+\ga z}{\ga-z}\cd\frac{b(\ga)}{(\rho^2+\ga^2)^{k+1}}.
\end{align*}
For $z$ such that $\Im{z}>2c$ and $s\in\r$, we consider the integral
\begin{equation}
\label{eq:H_indentity}
    \int_{\r} \left[H(z+s)+\ov{H(-\ov{z}+s)}\right]\ft{\p}(s)\d s
\end{equation}
which is going to be evaluated in two ways: one using \eqref{eq:thm2_F_representation}, the other using the almost periodicity of $F(z)$ in the half-plane $\Im{z}>c$. We split the computations into three steps:
\subsection*{Step 1: The terms associated with the measure $\eta$:} Let $z=x+iy$, with $y>2c$. Then from the definition of $S(z)$ and applying Fubini, we can write
\begin{align}
\label{eq:integral_S}
    \int_{\r} \left[S(z+s)+\ov{S(-\ov{z}+s)}\right]\ft{\p}(s)\d s=&-\sum_{\ga\in A} \frac{b(\ga)}{(\rho^2+\ga^2)^{k+1}}\cd\frac{1}{2\pi i} \int_{\r} \frac{g_{z,\ga}(s)\ft{\p}(s)}{s-(\ga-z)}\d s\\
    &+\sum_{\ga\in A} \frac{\ov{b(\ga)}}{(\rho^2+\ov{\ga}^2)^{k+1}}\cd\frac{1}{2\pi i} \int_{\r} \frac{h_{z,\ga}(s)\ft{\p}(s)}{s-(\ov{\ga}+z)}\d s,
\end{align}
where
\begin{align*}
    &g_{z,\ga}(s)=\left((z+s)^2+\rho^2\right)^k\left(\rho^2+\ga(z+s)\right)=\sum_{j=0}^{2k+1} c_j(z,\ga)s^j, \text{ and }\\
    &h_{z,\ga}(s)=\left((-z+s)^2+\rho^2\right)^k\left(\rho^2+\ov{\ga}(-z+s)\right)=\sum_{j=0}^{2k+1} d_j(z,\ov{\ga})s^j
\end{align*}
are polynomials in $s$ of degree at most $2k+1$, with $c_j(z,\ga)$ and $d_j(z,\ga)$ polynomials in $z$ and $\ga$, with degree in $\ga$ at most 1. Let $S_1(z)$ and $S_2(z)$ denote the first and second terms on the right-hand side of \eqref{eq:integral_S}, respectively. Since $\Im{(\ga-z)}<0$ and $\Im{(\ov{\ga}+z)}>0$, we deduce that, from Lemma \eqref{lemma:Cauchy_integral}
\begin{align*}
    S_1(z)&=\sum_{\ga\in A} \frac{b(\ga)}{(\rho^2+\ga^2)^{k+1}}\sum_{j=0}^{2k+1} \frac{c_j(z,\ga)}{(2\pi i)^j} \int_{0}^{L} \p^{(j)}(x)e^{-2\pi i(\ga-z)x}\d x, \text{ and}\\
    S_2(z)&=\sum_{\ga\in A} \frac{\ov{b(\ga)}}{(\rho^2+\ov{\ga}^2)^{k+1}}\sum_{j=0}^{2k+1} \frac{d_j(z,\ov{\ga})}{(2\pi i)^j} \int_{-L}^{0} \p^{(j)}(x)e^{-2\pi i(\ov{\ga}+z)x}\d x.
\end{align*}
We now claim that $S_1$ and $S_2$ are entire functions. Indeed, we can write $c_j(z,\ga)=c_j^\flat(z)+\ga c_j^\sharp(z)$, with $c_j^\flat(z)$ and $c_j^\sharp(z)$ polynomials in $z$. Then integration by parts allows us to write
\begin{align*}
    S_1(z)=\sum_{\ga\in A} \frac{b(\ga)}{(\rho^2+\ga^2)^{k+1}}\sum_{j=0}^{2k+1} &\frac{c_j^\flat(z)+z c_j^\sharp(z)}{(2\pi i)^j} \int_{0}^{L} \p^{(j)}(x)e^{-2\pi i(\ga-z)x}\d x\\
    &+\frac{c_j^\sharp(z)}{(2\pi i)^{j+1}}\left[\p^{(j)}(0)+ \int_{0}^{L} \p^{(j+1)}(x)e^{-2\pi i(\ga-z)x}\d x\right].
\end{align*}
Then, for $z\in C$, with $C\subset\cp$ a compact set, each term in the inner sum is $\ll_{C,\p} 1$, uniformly in $\ga\in A$. Since $\sum_{\ga\in A} |b(\ga)|(\rho^2+\ga^2)^{-k-1}<\infty$, an application of Morera's theorem gives that $S_1$ is entire. For the same reasoning, $S_2$ is also an entire function.

We now compute $\lim_{y\downarrow0} S_1(iy)+S_2(iy)$. Indeed, $g_{0,\ga}(s)=(s^2+\rho^2)^k(\rho^2+\ga s)=\sum_{j=0}^{2k+1} c_j(0,\ga)s^j$ and $h_{0,\ga}(s)=(s^2+\rho^2)^k(\rho^2+\ov{\ga}s)=\sum_{j=0}^{2k+1} d_j(0,\ov{\ga})s^j$, from which we conclude that $c_j(0,\ga)=d_j(0,\ga)$. Since $b(\ga)\in\r$ and $b(\ov{\ga})=b(\ga)$, where the latter follows from $\r-$symmetry, and since $\int_{-L}^{L} \p^{(j)}(x)e^{-2\pi i\ga x}\d x=\ft{\p^{(j)}}(\ga)=(2\pi i\ga)^j\ft{\p}(\ga)$, we arrive at
\begin{align}
    \lim_{y\downarrow 0} [S_1(iy)+S_2(iy)]=\sum_{\ga\in A} b(\ga)\ft{\p}(\ga).
\end{align}
\subsection*{Step 2: The terms associated with the measure $\nu$:} For $z\in\cp^+$ and $s\in\r$, we have
\begin{align*}
G(z+s)+\ov{G(-\ov{z}+s)}&= \int_{\r} P_z(t-s)(1+s^2)^k(1+t^2)\frac{d\nu(t)}{(1+t^2)^{k+1}}\\
&+2k \int_{\r} P_z(t-s)s(1+s^2)^{k-1}(st-s^2+t^2s^2-ts^3)\frac{d\nu(t)}{(1+t^2)^{k+1}}\\
&+\frac{1}{2} \int_{\r} P_z(t-s)h(z,s,t)\frac{d\nu(t)}{(1+t^2)^{k+1}},
\end{align*}
where $h(z,s,t)$ is a real polynomial in variables $z,s,t$ such that there is no constant term in $z$, the degree in $t$ is at most 2, and $P_z(t):=\frac{z}{i\pi (t^2-z^2)}$ is the Poisson kernel. Hence
\begin{equation}
    \label{eq:contribution_mu}
    \int_{\r} \left[G(z+s)+\ov{G(-\ov{z}+s)}\right]\ft{\p}(s)\d s=G_1(z)+G_2(z), 
\end{equation}
where
\begin{align*}
    G_1(z):=&\int_{\r} \int_{\r} P_z(t-s)(1+s^2)^k\ft{\p}(s)\d s(1+t^2)\frac{\d\nu(t)}{(1+t^2)^{k+1}}, \text{ and }\\
    G_2(z):=&2k \int_{\r} \int_{\r} P_z(t-s)s(1+s^2)^{k-1}(st-s^2+t^2s^2-ts^3)\ft{\varphi}(s)ds\frac{d\nu(t)}{(1+t^2)^{k+1}}\\
    &+\frac{1}{2} \int_{\r} \int_{\r} P_z(t-s)h(z,s,t)\ft{\varphi}(s)ds\frac{d\nu(t)}{(1+t^2)^{k+1}}.
\end{align*}
Note that $G_1$ and $G_2$ are holomorphic in the region  $\W:=\{z\in\cp;|\Re{z}|<1\, \text{ and }\, 0<\Im{z}<3c\}$, and $\lim_{y\ra 0+} G_2(iy)=0$, because $P_{iy}(t)$ is an approximation of identity as $y\ra0^+$.

\subsection*{Step 3: Evaluating the integral using its Fourier series} We now evaluate the integral \eqref{eq:H_indentity} using the almost periodicity of $F$. Fix a $z=x+i(c+\eta)$, $\eta>0$, and we apply Proposition \ref{prop:Bochner_approximation} to the function $F(\cd+i(c+\eta))\in\A\P(\r)$: there exists a sequence of functions $d_n:\r\ra[0,1]$ each one with finite support and with $\lim_{n\ra\infty} d_n(\la)=1$ if $\la\in\spec(F)$ and is zero otherwise, and such that the sequence of trigonometric polynomials
\begin{align*}
    p_n(x):=\sum_{\la\in\r} d_n(\la)\e F(\la)e^{-2\pi\la(c+\eta)}e^{2\pi i\la s}
\end{align*}
converges to $F(\cd+i(c+\eta))$ in the uniform norm. Writing $Q(z+s)=\sum_{l,j=0}^{2k+1} \theta_{l,j} z^l s^j$, $\theta_{l,j}\in\r$, we obtain
\begin{align*}
\int_{\r} \left[p_n(x+s)-iQ(z+s)\right]\ft{\varphi}(s)ds&=\sum_{0\leq\la\leq L} d_n(\la) \e F(\la)e^{2\pi i\la z}\varphi(\la)\\
&-i\sum_{l,j=0}^{2k+1} \theta_{l,j} z^l\frac{\varphi^{(j)}(0)}{(2\pi i)^j}.
\end{align*}
Letting $n\ra\infty$ and using the dominated convergence theorem, plus the fact that the function $\la\mt\e F(\la)$ is locally summable, we arrive at
\begin{equation*}
\int_{\r} H(z+s)\ft{\varphi}(s)ds=\sum_{0\leq\la\leq L} \e F(\la)e^{2\pi i\la z}\varphi(\la)-i\sum_{l,j=0}^{2k+1} \theta_{l,j} z^l\frac{\varphi^{(j)}(0)}{(2\pi i)^j}.
\end{equation*}
Applying the same argument to the integral with $\ov{H(-\ov{z}+s)}$, we obtain for $\Im{z}>c$
\begin{align}
\label{eq:contribution_a}
    \int_{\r} \left[H(z+s)+\ov{H(-\ov{z}+s)}\right]\ft{\p}(s)\d s=\sum_{0\leq\la\leq L} \e F(\la)e^{2\pi i\la z}\p(\la)-i\sum_{l,j=0}^{2k+1} \theta_{l,j} z^l\frac{\p^{(j)}(0)}{(2\pi i)^j}\\
    +\sum_{-L\leq\la\leq 0} \ov{\e F(-\la)}e^{-2\pi i\la z}\p(\la)+i\sum_{l,j=0}^{2k+1} \theta_{l,j} (-z)^l\frac{\p^{(j)}(0)}{(2\pi i)^j}=:T(z).
\end{align}
Since $\p\in L^\infty(\r)$ and $\la\mt\e F(\la)$ is locally summable,  the map $T(z)$ defined above has a holomorphic extension to $\W$.
\subsection*{The final step: taking the limit $z=iy\ra0$:} From \eqref{eq:H_indentity}, \eqref{eq:integral_S}, \eqref{eq:contribution_mu} and \eqref{eq:contribution_a}, we have
\begin{align*}
    T(z)=G_1(z)+G_2(z)+S_1(z)+S_2(z)
\end{align*}
holds for $\Im{z}>2c$. By analytic continuation, the equality is also true in the region $\W$. We now take $z=iy$ and send $y\downarrow0$: We have already proved that
\begin{align*}
    S_1(iy)+S_2(iy)\ra \sum_{\ga\in A} b(\ga)\ft{\p}(\ga).
\end{align*}
Using the fact that the Poisson kernel $P_{iy}(t)$ is an approximation to the identity as $y\downarrow0$, we obtain
\begin{align*}
    G_1(iy)+G_2(iy)\ra \int_{\r} \ft{\p}(t) \d\nu(t). 
\end{align*}
Finally, since the function $\la\ra\e F(\la)$ is locally summable, the dominated convergence theorem yields
\begin{align*}
    T(iy)&\ra \sum_{0\leq\la\leq L} \e F(\la)\p(\la)-i\sum_{j=0}^{2k+1} \theta_{0,j} \frac{\p^{(j)}(0)}{(2\pi i)^j}+\sum_{-L\leq\la\leq 0} \ov{\e F(-\la)}\p(\la)+i\sum_{j=0}^{2k+1} \theta_{0,j} \frac{\p^{(j)}(0)}{(2\pi i)^j}\\
    &=\sum_{\la\in\r} a(\la)\p(\la).
\end{align*}
Therefore,
\begin{align*}
    \sum_{\la\in\r} a(\la)\p(\la)=\int_{\r} \ft{\p}(t) \d\nu(t)+\sum_{\ga\in A} b(\ga)\ft{\p}(\ga),
\end{align*}
which concludes the proof.
\end{proof}
Finally, we present the proof of Corollary \ref{cor:bijection}.
\begin{proof}[Proof of Corollary \ref{cor:bijection}]
Indeed, if $(\nu_1+\eta_1,a_1)$ and $(\nu_2+\eta_2,a_2)$ give rise to the same function $F$, then comparing its Fourier coefficients it follows that $a_1=a_2$. The measures $\eta_1$ and $\eta_2$ are uniquely determined by the poles and residues of $F$, hence $\eta_1=\eta_2$. Finally, the uniqueness of the measure in the Herglotz-Nevanlinna factorization guarantees that $\nu_1=\nu_2$. This proves injectivity.

To show that the correspondence is surjective, let $G\in\G$. Then $G\in\A\P(\cp^++ic)$ for some $c>0$ and $\sum_{\la\in\r} |\e G(\la)| e^{-2\pi|\la|c_1}<\infty$ for some $c_1>0$. Since we also have that $G\in\G$, then following the proof of Theorem \ref{thm:converse_main} we conclude that $G$ is bounded on some half-plane, hence $\e G(\la)=0$ if $\la<0$. By Bochner's approximation (Proposition \ref{prop:Bochner_approximation}) and the dominated convergence theorem, we deduce that
\begin{equation*}
    G(z)=\e G(0)+\sum_{\la>0} \e G(\la) e^{2\pi i\la z},\quad \Im{z}>c_1.
\end{equation*}
We apply Theorem \ref{thm:converse_main} to $G$ and we obtain a pair $(\mu,a)\in\F\S$, where $a(\la)=\e G(\la)$ if $\la>0$, and $a(0)=2 \e G(0)$. Then the map $\F\S\ra\G$ sends $(\mu,a)$ to the function
\begin{equation*}
    F(z):=\frac{a(0)}{2}+\sum_{\la>0} a(\la)e^{2\pi i\la z},
\end{equation*}
which converges absolutely and uniformly in the region $\Im{z}>c_1$. We conclude that $F(z)=G(z)$ for $\Im{z}>c_1$. Therefore, $F=G$ on $\cp^+$ by analytic continuation.
\end{proof}

\subsection*{Acknowledgements.} This work was partially carried out during the author's visit to The University of Texas at Austin, supported by a Fulbright Doctoral Dissertation Research Award. The author gratefully acknowledges the support of the Fulbright Program and sincerely thanks The University of Texas at Austin for its hospitality and institutional support throughout the fellowship. The author would also like to thank Lior Alon for many fruitful discussions and valuable remarks that significantly improved this manuscript. The author is also grateful to the Massachusetts Institute of Technology (MIT) for its hospitality during a separate research visit. Finally, the author is deeply grateful to Felipe Gonçalves for his valuable suggestions and insightful remarks, and for introducing the author to the theory of crystalline measures.
\subsection*{Competing interest.} The author has no competing interest to declare.

%\newpage

\end{document}